\documentclass[11pt]{amsart}

\usepackage{amssymb}

\usepackage{amsmath}

\newtheorem{theorem}{Theorem}[section]

\newtheorem{corollary}[theorem]{Corollary}

\newtheorem{lemma}[theorem]{Lemma}

\newtheorem{proposition}[theorem]{Proposition}

\newtheorem*{thmA*}{Theorem A}
\newtheorem*{thmB*}{Theorem B}
\newtheorem*{ack*}{Acknowledgement}

\theoremstyle{definition}
\newtheorem{definition}[theorem]{Definition}
\newtheorem{example}[theorem]{Example}
\newtheorem{remark}[theorem]{Remark}

\DeclareMathOperator{\End}{End} 
\DeclareMathOperator{\diag}{diag}

\DeclareMathOperator{\rann}{r{.}ann}
\DeclareMathOperator{\lann}{l{.}ann}

\DeclareMathOperator{\Hom}{Hom}

\newcommand\RR{\mathbb{R}}
\newcommand\CC{\mathbb{C}}
\newcommand\CF{CF}
\newcommand\RF{RF}
\newcommand\BR{B(\RR)}
\newcommand\BC{B(\CC)}
\newcommand\minff{M}
\newcommand\minfr{M}
\newcommand\ZZ{\mathbb{Z}}
\newcommand\Zpos{\ZZ_{>0}}
\newcommand\RMC{R$\cdot$M$\cdot$C}
\DeclareMathOperator{\lc}{l{.}coeff}
\newcommand\smallsharp{\text{\raisebox{1pt}{$\scriptstyle\#$}}}
\newcommand\smallsharpp{\text{\raisebox{1pt}{$\scriptstyle\#'$}}}

\numberwithin{theorem}{section}
\numberwithin{equation}{theorem}

\begin{document}

\title{Positive definite matrices and involutions:\\ the manners of their infinite cousins}
\vspace{5mm}

\author[Ara]{Pere Ara}
\address{Departament de Matem\`atiques, Universitat Aut\`onoma de Barcelona, 08193 Bella\-terra, Barcelona, Spain, and Barcelona Graduate School of Mathematics (BGSMath), Campus de Bellaterra, Edifici C, 08193 Bellaterra, Barcelona, Spain}
\email{pere.ara@uab.cat}

\author[Goodearl]{\ Ken  Goodearl}
\address{Department of Mathematics, University of California, Santa Barbara, CA 93106, USA}
\email{goodearl@math.ucsb.edu}

\author[O'Meara]{\ Kevin C. O'Meara}
\address{University of Canterbury, Christchurch, New Zealand}
\email{staf198@uclive.ac.nz}

\date{23 July 2026}

\begin{abstract}
Matrices and involutions in/on the algebras $\BR$ and $\BC$ of real (resp., complex) row- and column-finite $\omega\times\omega$ matrices are studied.  It is proved that any positive definite $\RR$-algebra involution on $\BR$ (resp., any positive definite conjugate-linear involution on $\BC$) is given by conjugating the transpose involution (resp., the conjugate-transpose involution) with a positive definite matrix from the algebra in question.  All positive definite matrices in these algebras have Cholesky factorizations within the algebra.  Examples are constructed to show that such positive definite matrices need not have any eigenvalues for their canonical action on column-finite column vectors, and they need not have any square roots in $\BC$. 
\end{abstract}

\subjclass{Primary 15B48, 16W10; secondary 15A21, 15A23, 15B57, 16S50, 16W20, 17A05}

\keywords{Positive definite matrices, row and column finite matrices, involutions, positive definite involutions, matrix multiplication, associativity, eigenvalues, Cholesky factorizations, square roots}

\maketitle

\section{Introduction}

Deeply rooted though the concepts such as positive definite finite matrices are, tracing back to Henry J. Smith, James Sylvester, Arthur Cayley, and other mathematicians of the 19th century, extending these concepts to infinite matrices and working with them face a number of hurdles, as is well known. Here we are not even guaranteed associativity of multiplication (when defined), or any eigenvalues of a real symmetric matrix, for instance.

The algebra $B(F)$ of row- and column-finite matrices over a field $F$ provides a nice halfway house between the algebras $M_n(F)$ of finite matrices and the very large algebras of infinite matrices. Ring theoretically, three particular properties make $B(F)$ attractive: (i) it contains (isomorphic) copies of all countable-di\-men\-sion\-al algebras over $F$ \cite[Proposition 2.1]{GMM}; (ii) although not a (von Neumann) regular ring (so not every principal right ideal is generated by an idempotent), $B(F)$ is an exchange ring \cite[Theorem 1]{OM2003} (whence for each $a \in B(F)$, there is an idempotent $e \in B(F)$ with $e B(F) \subseteq aB(F)$ and $(1-e)B(F) \subseteq (1-a)B(F)$); (iii) $B(F)$ has a natural involution (the transpose). 
\medskip

One of our main targets is involutions (anti-automorphisms of order at most $2$) on the algebras $\BR$ and $\BC$.  We show that positive definite involutions on $\BR$ (resp., on $\BC$) are essentially the same as the transpose (resp., the conjugate-transpose) involution $*$, in that such involutions arise from $*$ via conjugation by positive definite matrices.  More precisely:

\begin{thmA*}
{\rm (Theorems \ref{T:EquivInvol}, \ref{T:Equiv*}, Corollaries \ref{proper.inv.BR}, \ref{proper.inv.BC})}
Let $F=\RR$ or $\CC$, and let $\smallsharp$ be an $\RR$-linear involution on $B(F)$ if $F=\RR$ or a conjugate-linear involution on $B(F)$ if $F=\CC$.  Then the following are equivalent:

{\rm(a)} $\smallsharp$ is proper, i.e., $A^{\#}A = 0$ implies $A=0$, for $A \in B(F)$.

{\rm(b)} $\smallsharp$ is positive definite, i.e., $\sum_{i=1}^n A_i^{\#}A_i = 0$ implies all $A_i = 0$, for $A_i \in B(F)$.

{\rm(c)} There is an invertible positive definite matrix $C \in B(F)$ such that $A^{\#} = C^{-1}A^*C$ for all $A \in B(F)$.
\end{thmA*}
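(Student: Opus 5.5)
The cycle I would prove is (c)$\Rightarrow$(b)$\Rightarrow$(a)$\Rightarrow$(c). Two of these steps are formal. The substance is in showing that an arbitrary involution $\smallsharp$ has the form $A\mapsto C^{-1}A^*C$, and then in extracting positive definiteness of $C$ from properness.

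For (c)$\Rightarrow$(b), suppose $\sum_i A_i^{\#}A_i=0$. Multiplying by $C$ gives $\sum_i A_i^*CA_i=0$. To test this, take any finitely supported column vector $v$. Then $\sum_i (A_iv)^*C(A_iv)=0$. Each $A_iv$ is again finitely supported, because $A_i$ is column-finite. Each summand is therefore a nonnegative real number, which forces $A_iv=0$, using positive definiteness of $C$ on finitely supported vectors. As $v$ ranges over the standard basis vectors, this gives $A_i=0$. The implication (b)$\Rightarrow$(a) is trivial.

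The step (a)$\Rightarrow$(c) has two parts. First I show that $\smallsharp$ is inner-related to $*$. The composite $\phi(A)=(A^{\#})^*$ is an $\RR$-algebra automorphism of $B(F)$. When $F=\CC$ it is $\CC$-linear, since two conjugate-linear maps compose to a linear one. I would then invoke the fact that every $F$-algebra automorphism of $B(F)$ is inner. This holds because $B(F)\cong \End(V)$ restricted to the maps with column-finite transpose, equivalently the endomorphisms of $V=F^{(\omega)}$ that are continuous for the finite topology, paired with $V$. Concretely, $\phi$ maps the minimal idempotents $e_{ii}$ to rank-one idempotents $f_i$ that are orthogonal and sum to $1$ in the appropriate sense. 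The matrix units then give a basis change $P\in B(F)$ with $\phi(A)=PAP^{-1}$. The delicate point is checking that both $P$ and $P^{-1}$ are row- and column-finite; I would verify this through the action of $\phi$ on $e_{ii}$ and on the ideal of finite-rank matrices.

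Given $\phi$ inner, we get $A^{\#}=(PAP^{-1})^*=D^{-1}A^*D$ with $D=P^*$. Next, $\smallsharp$ has order $2$, so $A=D^{-1}D^*AD^{*-1}D$ for every $A$. Hence $D^{*-1}D$ is central, say $D^*=\lambda D$ with $\lambda$ a scalar. When $F=\RR$ this gives $\lambda=\pm1$. When $F=\CC$ we get $|\lambda|=1$, and rescaling $D$ by a suitable complex scalar makes $D$ Hermitian; rescaling does not change $\smallsharp$. In the real case, $\lambda=-1$ makes $D$ skew. Then $(e_{11})^{\#}e_{11}=D^{-1}e_{11}De_{11}$ has $(1,1)$-entry computation involving $d_{11}=0$. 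More generally, with $D$ skew, $v^*Dv=0$ for all vectors $v$. I would use this to build a nonzero rank-one $A=xy^*$ with $A^{\#}A=D^{-1}y x^* D x y^*=0$, which contradicts properness; so $D$ is symmetric.

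Finally I need $D$, or $-D$, to be positive definite. Properness applied to $A=xy^*$ shows $x^*Dx\neq0$ for every nonzero finitely supported $x$. The set of such $x$ is the complement of $0$ in a connected space once $\dim\ge2$. Hence $x^*Dx$ has constant sign, and replacing $D$ by $-D$ if necessary gives $C$. I expect the main obstacle to be the inner-automorphism step for $B(F)$, specifically controlling row- and column-finiteness of $P^{-1}$. If the paper provides such a result earlier, I will cite it directly.
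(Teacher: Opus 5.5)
Your proof is correct. The inner-automorphism step is Jacobson's theorem, which the paper records as Theorem~\ref{T:B}, so cite it rather than finish your sketch via the $e_{ii}$. That sketch leaves open exactly the hard part, namely row- and column-finiteness of $P$ and $P^{-1}$. From there your normalization of $D$ is the paper's: Jacobson's trick, rescaling in the complex case, and $E_{11}^{\#}E_{11}=0$ to exclude the skew case.

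The routes differ in two places.

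\emph{(c)$\Rightarrow$(b).} The paper goes through Theorems~\ref{T:EquivInvol}(1) and \ref{T:Equiv*}(1). It writes an invertible positive definite $C$ as $T^*T$ with $T$ a unit of $B(F)$, which requires the Cholesky results (Theorems~\ref{posdef.vs.Chofac} and \ref{Ch.fac.inBR}). Then $\smallsharp$ is equivalent to $*$ and inherits positive definiteness from it. Your evaluation of $\sum_i A_i^*CA_i=0$ against $v\in V$ needs only positivity of $C$ on $V$ and column-finiteness of the $A_i$.

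\emph{Definiteness of $\pm C$ in (a)$\Rightarrow$(c).} The paper passes to a finite corner $C_m$ and splits into cases. If $C_m$ is invertible, it diagonalizes and writes down an explicit isotropic $2\times 2$ block; if $C_m$ is singular, it uses a null vector. Your rank-one matrices $xy^*$ show directly that $x^*Cx\neq 0$ for all nonzero $x\in V$, and a sign argument finishes. State that last step without appealing to a topology on $V$. If $x^*Cx>0>y^*Cy$, then $x,y$ are independent. The map $t\mapsto (x+ty)^*C(x+ty)$, for $t\in\RR$, is a real quadratic that is positive at $t=0$ and has negative leading coefficient. So it vanishes at some $t_0$, with $x+t_0y\neq 0$.

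What the paper's route buys is the stronger structural statement that proper involutions are equivalent to $*$ (parts (1) and (3) of those theorems). That genuinely needs $C=T^*T$ with $T$ a unit of $B(F)$. Yours proves exactly Theorem A with no Cholesky theory and no spectral theorem.
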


Theorem A also holds in the finite case, i.e., with $B(F)$ replaced by $M_n(F)$.
Development of this theorem requires work with positive definite matrices in $B(F)$, so another of our aims is the study of those.  In particular:

\begin{thmB*}
{\rm(Theorems \ref{posdef.vs.Chofac}, \ref{Ch.fac.inBR})}
Let $F=\RR$ or $\CC$, and let $A \in B(F)$ be positive definite, i.e., $x^*Ax > 0$ for all nonzero column vectors $x$ with finitely many nonzero entries.

{\rm(a)} $A$ has a unique Cholesky factorization within $B(F)$, i.e., $A = U^*U$ for a unique upper triangular matrix $U \in B(F)$ with all diagonal entries positive.

{\rm(b)} $A$ is invertible in $B(F)$ if and only if $U$ is invertible in $B(F)$.
\end{thmB*}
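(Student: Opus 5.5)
Theorem B has two parts. For (a) I will build $U$ from the leading principal submatrices, show that it is forced to lie in $B(F)$, and get uniqueness from the finite case. For (b) I will use triangularity to move invertibility between $A$ and $U$.

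\textbf{Existence of $U$.} Write $A_n$ for the leading $n\times n$ principal submatrix of $A$. Each $A_n$ is a positive definite finite matrix, so it has a unique Cholesky factorization $A_n=U_n^*U_n$, with $U_n$ upper triangular and positive diagonal. Uniqueness in the finite case makes the factorizations compatible: writing $U_{n+1}=\begin{pmatrix} V & w\\ 0 & d\end{pmatrix}$, the upper-left block of $U_{n+1}^*U_{n+1}$ is $V^*V=A_n$, so $V=U_n$. Hence the $U_n$ are the leading blocks of a single infinite upper triangular matrix $U$ with positive diagonal. Since $U$ is upper triangular, $U^*U$ is defined entrywise by finite sums, and $U^*U=A$.

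\textbf{Row- and column-finiteness of $U$.} This is the main obstacle. Column-finiteness is automatic for an upper triangular matrix; row-finiteness is the real issue.
\begin{itemize}
\item Let $L=U^*$, which is lower triangular. The standard recursion gives $u_{ij}=\bigl(a_{ij}-\sum_{k<i}\overline{u_{ki}}u_{kj}\bigr)/u_{ii}$ for $j>i$.
\item Induct on $i$ to show that row $i$ of $U$ is finite. Since $A$ is row-finite, $a_{ij}=0$ for $j$ large. By the induction hypothesis, rows $1,\dots,i-1$ of $U$ are finite, so $u_{kj}=0$ for $k<i$ and $j$ large.
\item Therefore $u_{ij}=0$ for $j$ large, and row $i$ is finite. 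So $U$ is row-finite.
\end{itemize}
The only care needed is that the recursion uses only finitely many earlier rows, which is exactly what keeps the induction valid. Thus $U\in B(F)$.

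\textbf{Uniqueness of $U$.} Suppose $A=W^*W$ with $W\in B(F)$ upper triangular with positive diagonal. Truncating gives $A_n=W_n^*W_n$, because triangularity makes the leading block of $W^*W$ depend only on $W_n$. Finite uniqueness then gives $W_n=U_n$ for every $n$, so $W=U$.

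\textbf{Part (b).} If $U$ is invertible in $B(F)$, then so is $U^*$, and hence $A=U^*U$ is invertible.

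For the converse, note first that an upper triangular $U$ with nonzero diagonal has a unique upper triangular two-sided inverse $U^{-1}$, computed by back-substitution. That inverse is column-finite, but it may fail to be row-finite. So invertibility of $U$ in $B(F)$ is equivalent to $U^{-1}$ being row-finite.

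Now suppose $A^{-1}\in B(F)$. Set $X=UA^{-1}$, which lies in $B(F)$ because $B(F)$ is an algebra. Then $U^*X=U^*UA^{-1}=AA^{-1}=1$, so $X$ is a right inverse of the lower triangular matrix $U^*$.

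The delicate point is associativity when multiplying by the formal inverse $(U^*)^{-1}=(U^{-1})^*$, which is lower triangular and row-finite. For a lower triangular row-finite matrix times a general product, all the sums involved are finite, so associativity holds. This gives $(U^{-1})^*=(U^{-1})^*(U^*X)=X$. Hence $(U^{-1})^*\in B(F)$, so $U^{-1}\in B(F)$, and $U$ is invertible in $B(F)$.

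I expect this associativity bookkeeping to be the step that needs the most checking. If it proves awkward, the fallback is to verify $U^{-1}=A^{-1}U^*$ directly, which holds entrywise because every sum involved is finite.
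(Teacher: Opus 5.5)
Your proposal is correct and follows the paper's proof in substance. Part (a) is the paper's principal-block construction (Theorems \ref{posdef.vs.Chofac} and \ref{Ch.fac.inBR}(a)) with the row-finiteness step written entrywise, since your recursion for $u_{ij}$ is exactly the forward substitution hidden in $c_n=(U_n^*)^{-1}c$; part (b) is the mirror image of the paper's argument, which notes that $A^{-1}U^*\in B(F)$ is a left inverse of $U$ and hence equals $U^{-1}$ (your $X$ is its conjugate transpose), and the associativity step you flag is harmless because $(U^{-1})^*$, $U^*$ and $X$ all lie in the associative algebra $\RF$ (alternatively, use \RMC-associativity, as $X$ is column-finite).
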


Unlike the situation for finite matrices, positive definite matrices in $\BR$ or $\BC$ need not be invertible, need not have any eigenvalues, and need not have any square roots -- see Examples \ref{unit.posdef.no.eigenval}--\ref{tridiag.posdef.no.eigenval}, \ref{tridiag.posdef.no.sqrt}, \ref{unit.posdef.no.sqrt}.

Underlying many ``bad manners'' of infinite matrices is the fact that matrix products, even when all terms are defined, are not necessarily associative.  We pinpoint a number of aspects of this problem in Section \ref{sec:assoc} and \S\ref{subsec:invert}.

\subsection{Notation and Conventions}  \label{NotaConv}
We consider several types of (countably) infinite matrices, with row and/or column index sets which are finite or of order type $\omega$.  These sets will be indexed by integer intervals $[1,n]$ (in the finite case) or by the positive integers (in the infinite case).  Identity matrices will be denoted by $I$, or by $I_n$ or $I_\omega$ if the size needs to be recorded.  The transpose of a matrix $A$ or vector $v$ will be denoted $A^t$ or $v^t$, respectively.

Products of matrices (when defined) are always by matrix multiplication, as are actions of such matrices on row or column vectors.  We do not assume any topology (other than the discrete topology) on our base field, so \emph{matrix products are based on discrete row-by-column products}.  Namely, the product of an infinite row $r = [r_1,r_2,\dots]$ times an infinite column $c = [c_1,c_2,\dots]^t$ is defined if and only if $r_jc_j = 0$ for all but finitely many $j$, in which case $rc = \sum_{j=1}^\infty r_jc_j$ amounts to a finite sum.

Fix a field $F$, and let $\minff = \minff(F)$ denote the vector space of all $\omega\times\omega$ matrices over $F$.  There is no matrix-type multiplication operation on all of $\minff$, but various subspaces of $\minff$ support matrix multiplication and are associative algebras over $F$, namely: The space $\CF = \CF(F)$ of column-finite matrices in $\minff$, the space $\RF = \RF(F)$ of row-finite matrices in $\minff$, and the space $B(F) = \CF(F)\cap \RF(F)$ of row- and column-finite matrices in $\minff$.  The algebra $\RF$ is anti-isomorphic to $\CF$, via the transpose.  In terms of transformations, $\CF$ is isomorphic to the algebra of all linear transformations of a countably-infinite dimensional vector space over $F$ (with transformations written on the left of vectors).

The algebras $\RF$ and $\CF$ act on the left and right on $\minff$, respectively, making $\minff$ a left $\RF$-module and a right $\CF$-module.  (The associativity of products from $\RF\times\RF\times M$ and $M\times\CF\times\CF$ is checked in \cite[Proposition 1.1(2)(3)]{O2}.)  Moreover, by \cite[Proposition 1.1(1)]{O2}, products from $\RF\times \minff\times\CF$ are associative (so that $\minff$ is an $(\RF,\CF)$-bimodule).  Let us refer to the associativity of such products as \emph{\RMC-associativity}, short for ``rowfinite$\cdot$middle$\cdot$columnfinite-associativity". 

In any of the above matrix spaces, as in the finite matrix algebras $M_n(F)$, we write $E_{ij}$ for the standard matrix units, where the $(i,j)$-entry of $E_{ij}$ is $1$ and all other entries are $0$.

Let $W := W_F$ denote the vector space of $\omega\times1$ column vectors over $F$, and let $V = V_F$ denote the subspace of $W$ consisting of column vectors with only finitely many nonzero entries.  The corresponding row spaces are obtained by transposing, so $W^t = \{ w^t \mid w \in W \}$ denotes the space of all $1\times\omega$ row vectors over $F$, and $V^t = \{ v^t \mid v\in V \}$ is the subspace of $W^t$ consisting of row vectors with only finitely many nonzero entries.  The space $W^t$ may be identified with the dual space $\Hom_F(V,F)$ of $V$.  Via matrix multiplication, $\RF$ acts on the left on $W$, while $\CF$ acts on the left on $V$.  Similarly, $\CF$ acts on the right on $W^t$, while $\RF$ acts on the right on $V^t$.

All of the above actions are associative, so that
$$
\begin{matrix}
\text{$V$ is a left $\CF$-module,} &\qquad\quad\text{$V^t$ is a right $\RF$-module,}\\
\text{$W$ is a left $\RF$-module,} &\qquad\quad\text{$W^t$ is a right $\CF$-module.}
\end{matrix}
$$
In particular, $V$, $V^t$, $W$, $W^t$ are all $B(F)$-modules.  Moreover, $V$ is a $B(F)$-sub\-mod\-ule of $W$ and $V^t$ is a $B(F)$-submodule of $W^t$.

The module-theoretic term ``annihilator" will be used in place of ``kernel", so that for a matrix $A \in \CF$,
\begin{align*}
\rann_V(A) &:= \{ v \in V \mid Av = 0 \}  \\
 \lann_{W^t}(A) &:= \{ f \in W^t \mid fA = 0 \}
\end{align*}
denote the right and left annihilators of $A$ in $V$ and $W^t$, respectively.  The notations $\rann_W(B)$ and $\lann_{V^t}(B)$, for $B \in \RF$, are parallel.

Eigenvalues for a matrix $A \in B(F)$ depend on the vector space in which eigenvectors may occur.  In particular, we must distinguish between eigenvalues for the action of $A$ on $V$ versus eigenvalues for the action of $A$ on $W$, i.e., between the existence of eigenvectors in $V$ versus eigenvectors in $W$.

Finally, a \emph{unit} in a ring (or algebra) with identity is any element with a multiplicative inverse.

                                %
                                %
\section{Treading warily with associativity in $M(F)$}  \label{sec:assoc}

Many of us mistakenly assume that multiplication of infinite matrices over a field $F$ must, as in the finite case, be associative (when all products involved make sense). Instead, we should be treading very warily when working with infinite matrices, because -- as a number of people over the years have observed -- we can have $(AB)C \neq A(BC)$ even when all the products $AB$, $(AB)C$, $BC$, $A(BC)$ are defined.

To reiterate the definedness of products in $M = M(F)$:
\begin{itemize}
\item[] The product $AB$ of matrices $A = (a_{ij})$ and $B = (b_{ij})$ in $M$ is defined if and only if 
\begin{itemize}
\item[$\bullet$] For all $i,k \in \Zpos$, there are at most finitely many $j \in \Zpos$ for which $a_{ij}b_{jk} \ne 0$.
\end{itemize}
\end{itemize}
One example of non-associativity is the following.  Other instances are given in Example \ref{l&r.inverse.not.match} and Remark \ref{no.weaken.BF.invert}.

\begin{example}  \label{kevin.nonassoc}
\cite[Example, p.2187]{O2}
Consider the following matrices in $M$:
$$
A :=
\renewcommand{\arraystretch}{1.2}
\begin{bmatrix}
 1&-1&1&-1&\cdots\\  0&0&0&0\\ 0&0&0&0\\ 0&0&0&0\\  \vdots &&&&\ddots \end{bmatrix}, \qquad
 B :=
\begin{bmatrix}
 1&1&0&0&\cdots\\  0&1&1&0\\ 0&0&1&1\\ 0&0&0&1\\  \vdots &&&&\ddots \end{bmatrix}, \qquad 
C := A^t\,.
$$
Then the products $AB$, $BC$, $(AB)C$, $A(BC)$ are all defined, and $(AB)C = E_{11}$ whereas $A(BC) = 0$.
\end{example}

The set $M$ together with various operations is a type of \emph{partial nonassociative algebra over $F$}, with
\begin{itemize}
\item Fully defined addition and scalar multiplication operations, under which $M$ is a vector space over $F$.
\item A partially defined multiplication operation, which distributes over addition where defined, and with which scalar multiplication associates where products are defined.
\item An identity element.
\end{itemize}

Dating back to at least the 1950's, genuinely nonassociative $F$-algebras (as against partial algebras) of $\omega \times \omega$ matrices with well-defined matrix products have also been observed (see \cite{Ver} for instance). Building on examples such as \ref{kevin.nonassoc}, George Bergman recently gave a lovely new class of examples involving failure of even ``power associativity''.

\begin{example}  \label{ex.bergman2}  \cite{GB}
Note first that the set $M_4(M)$ of $4\times4$ matrices with entries from $M$ is a partial nonassociative $F$-algebra with respect to matrix operations together with the operations on $M$.  By a standard ``stretching argument'', $M_4(M) \cong M$ as partial algebras.

Choose any $A,B,C \in M$ such that $AB$, $BC$, $(AB)C$, $A(BC)$ are all defined but $(AB)C \ne A(BC)$, and set
$$
D :=
\renewcommand{\arraystretch}{1.2}
\begin{bmatrix}
0&A&0&0\\  0&0&B&0\\ 0&0&0&C\\ 0&0&0&0
\end{bmatrix} \in M_4(M).
 $$
Arbitrary products with all factors equal to $D$, however bracketed, are defined.  In particular,
\begin{align*}
D^2 &= \renewcommand{\arraystretch}{1.2}
\begin{bmatrix}
0&0&AB&0\\  0&0&0&BC\\ 0&0&0&0\\ 0&0&0&0
\end{bmatrix}, 
&\qquad D(D^2) &=
\renewcommand{\arraystretch}{1.2}
\begin{bmatrix}
0&0&0&A(BC)\\  0&0&0&0\\ 0&0&0&0\\ 0&0&0&0
\end{bmatrix},\\
(D^2)D &=
\renewcommand{\arraystretch}{1.2}
\begin{bmatrix}
0&0&0&(AB)C\\  0&0&0&0\\ 0&0&0&0\\ 0&0&0&0
\end{bmatrix},
&&\begin{matrix}
\text{products with $4$ or more}\\
\text{$D$ factors equal $0$.}
\end{matrix}
\end{align*}
Hence, the set $S := \{ I, D, D^2, D(D^2), (D^2)D, 0 \} \subseteq M_4(M)$ is closed under multiplication.  Since $(AB)C \ne A(BC)$, we have $(D^2)D \ne D(D^2)$, making $S$ nonassociative (hence a \emph{magma} rather than a monoid).

The $F$-span $R$ of $S$ is thus a nonassociative algebra (with fully defined multiplication) of dimension $5$.  Moreover, $R$ is isomorphic to a nonassociative subalgebra of $M$, due to the isomorphism $M_4(M) \cong M$.
\end{example}

In spite of the bad behavior exhibited in examples such as those above, there are good instances of associativity, the most important of which is \RMC-associativity, as mentioned in \S\ref{NotaConv}.  We repeat the result below for emphasis.  Note, however, Example \ref{kevin.nonassoc} shows that ``column-finite $\times$ arbitrary $\times$ row-finite'' associativity fails. (The historical choice of matrix multiplication has an in-built bias of rows over columns -- rows go first in the inner products.)

\begin{proposition}  \label{RMC-assoc}
{\rm\cite[Proposition 1.1]{O2}}
If $A, B, C \in M$ with $A$ row-finite, $B$ arbitrary, and $C$ column-finite, then the products $AB$, $BC$, $(AB)C$, $A(BC)$ are all defined, and $(AB)C = A(BC)$.  In particular, $M$ is an $(\RF,\CF)$-bimodule.
\end{proposition}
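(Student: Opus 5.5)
The proof is a direct entrywise check. The key point is that the row-finiteness of $A$ and the column-finiteness of $C$ force every sum that appears to be over a finite index set fixed in advance by the pair $(i,l)$.

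Write $A=(a_{ij})$, $B=(b_{jk})$, $C=(c_{kl})$, and fix row and column indices $i,l$. Since $A$ is row-finite, the set $J_i:=\{j \mid a_{ij}\neq 0\}$ is finite. Since $C$ is column-finite, the set $K_l:=\{k\mid c_{kl}\neq 0\}$ is finite.

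First, definedness. The $(i,k)$-entry of $AB$ is $\sum_j a_{ij}b_{jk}$, and its nonzero terms have $j\in J_i$, so $AB$ is defined. Symmetrically, the $(j,l)$-entry of $BC$ is $\sum_k b_{jk}c_{kl}$, whose nonzero terms have $k\in K_l$, so $BC$ is defined. Moreover, $(AB)_{ik}=\sum_{j\in J_i}a_{ij}b_{jk}$, and $(BC)_{jl}=\sum_{k\in K_l}b_{jk}c_{kl}$.

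Next, the triple products. The $(i,l)$-entry of $(AB)C$ is $\sum_k (AB)_{ik}c_{kl}$, and only $k\in K_l$ can contribute, so $(AB)C$ is defined. Likewise the $(i,l)$-entry of $A(BC)$ is $\sum_j a_{ij}(BC)_{jl}$, and only $j\in J_i$ can contribute, so $A(BC)$ is defined. No hypothesis on $B$ is needed: $AB$ and $BC$ need not be row- or column-finite, because the outer factor alone controls finiteness at each step.

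For equality, expand both entries:
$$
((AB)C)_{il}=\sum_{k\in K_l}\sum_{j\in J_i}a_{ij}b_{jk}c_{kl}
\quad\text{and}\quad
(A(BC))_{il}=\sum_{j\in J_i}\sum_{k\in K_l}a_{ij}b_{jk}c_{kl}.
$$
Both are finite double sums over $J_i\times K_l$, so they agree by interchanging the order of summation. This gives $(AB)C=A(BC)$.

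The only care needed, which is the "obstacle" here, is to use the fixed finite index sets $J_i$ and $K_l$ from the start. One should not argue that each inner sum is merely finite, because that is exactly the reasoning that fails in Example \ref{kevin.nonassoc}: there the relevant index set for the outer factor is infinite.

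The bimodule statement then follows. $M$ is closed under left multiplication by $\RF$ and right multiplication by $\CF$, by the definedness just shown with $C=I$ or $A=I$. Taking $B$ row-finite, or $B$ column-finite, in the identity above gives the module axioms $(A_1A_2)B=A_1(A_2B)$ and $B(C_1C_2)=(BC_1)C_2$. The identity with $B$ arbitrary gives the compatibility $(AB)C=A(BC)$. Distributivity over addition is immediate entrywise.
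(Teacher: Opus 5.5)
Your entrywise argument for the main assertion is correct. For fixed $(i,l)$, every sum involved is supported in the fixed finite sets $J_i$ and $K_l$, so $((AB)C)_{il}$ and $(A(BC))_{il}$ are the same finite sum over $J_i\times K_l$. The paper gives no proof of its own here; it simply cites \cite[Proposition 1.1]{O2}, and what you wrote is the standard argument.

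The step that does not work is your derivation of the one-sided module axioms in the last paragraph. Making the middle factor of $(AB)C=A(BC)$ row-finite only gives associativity for triples in $\RF\times\RF\times\CF$, because the third factor must still be column-finite. The left-module law $(A_1A_2)X=A_1(A_2X)$ is needed for arbitrary $X\in M$, and that is not a special case of the identity in the statement. The same holds symmetrically for $X(C_1C_2)=(XC_1)C_2$. The paper treats these as separate facts, citing \cite[Proposition 1.1(2)(3)]{O2} in \S\ref{NotaConv}. Only the compatibility condition $(AX)C=A(XC)$ comes from the statement itself.

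The repair is the same finite-support argument with a different second index set. For fixed $(i,l)$, let $j$ range over $J_i$ and $k$ over $K:=\bigcup_{j\in J_i}\{k : (A_2)_{jk}\neq 0\}$. This $K$ is finite because $J_i$ is finite and $A_2$ is row-finite. The same choice shows that row $i$ of $A_1A_2$ is supported in $K$, so $A_1A_2$ is row-finite and $(A_1A_2)X$ is defined. Both sides then equal $\sum_{j\in J_i}\sum_{k\in K}(A_1)_{ij}(A_2)_{jk}x_{kl}$. The right-module law is the column version of this.
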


                                %
                                %
\section{$B(F)$ for general $F$}  \label{sec:BF}

We record some information on / conditions for invertibility, triangularity, and eigenvalues.  These results hold for arbitrary base fields $F$.  

\subsection{Invertibility}  \label{subsec:invert}
An immediate condition for invertibility in $B(F)$ is the following.

\begin{proposition}  \label{cor:units-go-well}
		Let $A\in M$. Suppose that there exist $X\in \RF$ and $Y\in \CF$ such that $XA=I=AY$. Then $X=Y\in B(F)$.  Hence, if $A$ is in $B(F)$ then it is invertible in $B(F)$.  
		
In particular, if $A \in B(F)$ is invertible in both $\RF$ and $\CF$, then $A$ is invertible in $B(F)$. 
	\end{proposition}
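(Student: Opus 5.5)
The argument is the usual ``left inverse equals right inverse'' computation, $X = X(AY) = (XA)Y = Y$. The only delicate point is that the middle step must be a legitimate associativity, and Proposition~\ref{RMC-assoc} supplies exactly that.

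The steps are as follows. Since $X\in\RF$, $A\in M$ is arbitrary and $Y\in\CF$, Proposition~\ref{RMC-assoc} (\RMC-associativity) tells us that $XA$, $AY$, $(XA)Y$ and $X(AY)$ are all defined and that $(XA)Y = X(AY)$. Substituting the hypotheses $XA = I = AY$ gives
\[
Y = IY = (XA)Y = X(AY) = XI = X .
\]
Here $IY=Y$ and $XI=X$ hold trivially, since $I$ is row- and column-finite. Thus $X = Y$ is simultaneously row-finite (being $X$) and column-finite (being $Y$), so $X=Y\in \RF\cap\CF = B(F)$.

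For the second assertion, if in addition $A\in B(F)$, then $X=Y$ is an element of $B(F)$ with $XA = I = AX$, so $A$ is a unit of $B(F)$. For the final sentence, suppose $A\in B(F)$ is invertible in $\RF$ and in $\CF$. Invertibility in $\RF$ gives some $X\in\RF$ with $XA=I$, using only the left-inverse half. Invertibility in $\CF$ gives some $Y\in\CF$ with $AY=I$, using only the right-inverse half. The first part then applies to $X$ and $Y$.

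There is no real obstacle; the argument rests entirely on choosing the right associativity result. Naive associativity fails in general, as Example~\ref{kevin.nonassoc} shows, which is exactly why the hypothesis pairs the row-finite $X$ on the left with the column-finite $Y$ on the right. I would note this explicitly in the proof.
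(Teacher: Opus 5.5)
Your proof is correct and is the same as the paper's: $X = X(AY) = (XA)Y = Y$, where the middle step is justified by \RMC-associativity (Proposition \ref{RMC-assoc}) because $X$ is row-finite and $Y$ is column-finite. Your derivations of the two remaining assertions from this identity are also sound.
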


\begin{proof}
This is the standard argument for uniqueness of inverses, which is allowed by \RMC-associativity: $X = X(AY) = (XA)Y = Y$.
\end{proof}

The proposition fails, however, if $X$ and $Y$ are not assumed to be row-finite (resp., column-finite).
	
\begin{example}  \label{l&r.inverse.not.match}
There exist $A\in B(F)$ and $X,Y\in M$ such that $XA=I=AY$ but $X \ne Y$.  For example, start with
$$
A := 
\renewcommand{\arraystretch}{1.4}
\begin{bmatrix} 1&-1&0&0&\cdots\\  0&1&-1&0\\ 0&0&1&-1\\  \vdots &&&\ddots&\ddots \end{bmatrix} \in B(F),
$$
and take 
$$
X := 
\renewcommand{\arraystretch}{1.2}
\begin{bmatrix}
 1&1&1&1&\cdots\\  0&1&1&1\\ 0&0&1&1\\ 0&0&0&1\\  \vdots &&&&\ddots \end{bmatrix} \in \CF, \qquad
Y:=
\renewcommand{\arraystretch}{1.2}
\begin{bmatrix} 
0&0&0&0&\cdots\\  -1&0&0&0\\ -1&-1&0&0\\  -1&-1&-1&0\\ \vdots&&&&\ddots
\end{bmatrix} \in \RF.
$$
Note that $XA = AX = I$, so that $A$ is actually invertible in $\CF$.  It is not invertible in $\RF$, since $YA \ne I$.

We have here another example of non-associativity, since $(XA)Y \ne X(AY)$.
\end{example}

In the situation of Proposition \ref{cor:units-go-well}, the assumption that $A$ has a two-sided inverse in $B(F)$ does not force $A$ to be in $B(F)$, as the following example of George Bergman shows.  

\begin{example}  \label{ex.bergman1}  \cite{GB}
There exist $A \in M$ and $X \in B(F)$ such that $XA = I = AX$ but $A \notin B(F)$.  Take
$$
A := \bigl( \min(i,j) \bigr) = 
\renewcommand{\arraystretch}{1.2}
\begin{bmatrix} 1&1&1&1&\cdots\\  1&2&2&2\\ 1&2&3&3\\ 1&2&3&4\\  \vdots &&&&\ddots \end{bmatrix}, \qquad
X := 
\renewcommand{\arraystretch}{1.2}
\begin{bmatrix} 2&-1&0&0&\cdots\\  -1&2&-1&0\\ 0&-1&2&-1\\ 0&0&-1&2\\  \vdots &&&&\ddots \end{bmatrix}.
$$
\end{example}

The following theorem is implicit in Proposition \ref{cor:units-go-well} but not in as revealing a form.

\begin{theorem}  \label{units.BF}
A matrix $A \in B(F)$ is a unit of $B(F)$ if and only if the column space of $A$ equals $V$ and the row space of $A$ equals $V^t$.
\end{theorem}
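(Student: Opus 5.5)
The plan is to reduce the statement to Proposition \ref{cor:units-go-well} by producing a column-finite right inverse and a row-finite left inverse. By ``column space of $A$'' I mean the span of the columns of $A$ (each of which lies in $V$, since $A$ is column-finite), i.e.\ $AV$; likewise the row space is $V^tA$.

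For the forward direction, suppose $AX = XA = I$ with $X \in B(F)$. Since $V$ is a left $B(F)$-module, $AV \subseteq V$. Moreover, for every $v \in V$ we have $Xv \in V$ and $v = (AX)v = A(Xv)$, so $AV = V$. The same argument on the right, using that $V^t$ is a right $B(F)$-module, gives $V^tA = V^t$.

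For the converse, assume $AV = V$ and $V^tA = V^t$.
\begin{itemize}
\item For each standard basis column $e_j \in V$, choose $y_j \in V$ with $Ay_j = e_j$. Let $Y$ be the matrix whose $j$th column is $y_j$. Every column of $Y$ is finite, so $Y \in \CF$. Then $AY$ is defined, and its $j$th column is $Ay_j = e_j$, because the product is computed column by column. Hence $AY = I$.
\item Dually, choose $x_i \in V^t$ with $x_iA = e_i^t$, and let $X$ be the row-finite matrix with rows $x_i$. Then $XA = I$.
\end{itemize}
Proposition \ref{cor:units-go-well} now gives $X = Y \in B(F)$, and this matrix is a two-sided inverse of $A$. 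So $A$ is a unit of $B(F)$.

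Nothing here is a serious obstacle. The only point needing care is the check that a product such as $AY$ is computed column by column, i.e.\ that the $j$th column of $AY$ is $A$ applied to the $j$th column of $Y$. This holds because every entry of a matrix product is a finite row-by-column sum, with no topology involved. The \RMC-associativity needed to identify $X$ with $Y$ is already packaged inside Proposition \ref{cor:units-go-well}.
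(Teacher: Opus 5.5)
Your proof is correct and is essentially the paper's argument. You build a column-finite right inverse and a row-finite left inverse from the two spanning conditions, then identify them via \RMC-associativity (Proposition \ref{cor:units-go-well}). The only cosmetic difference is in the forward direction: you write $v = A(Xv)$ using the module structure on $V$, where the paper reads off each standard basis vector as a combination of the columns of $A$ with coefficients from the corresponding column of the inverse, which amounts to the same thing.
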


\begin{proof}
It is worth dwelling a moment on the proof of this important result, which does involve outside rings.  First, $A$ has a right inverse in $\CF$ iff the column space of $A$ equals $V$.  For the ``if" part, each standard basis vector $v_j$ in $V$, being a linear combination of columns of $A$, can be written $v_j = Ay_j$ for some $y_j \in V$, and we can take $y_j$ as the $j$-th column of a matrix $Y \in \CF$ to get a right inverse for $A$.  Conversely, if $AY = I$ for some $Y \in \CF$, then each $v_j$ is a linear combination of the columns of $A$ with coefficients from the $j$-th column of $Y$.

Similarly, $A$ has a left inverse $X$ in $\RF$ iff the row space of $A$ equals $V^t$.  The coup de grace follows from associativity as in Proposition \ref{cor:units-go-well}: If $A$ has a left inverse $X$ in $\RF$ and a right inverse $Y$ in $\CF$, then $X = X(AY) = (XA)Y = Y$.  Thus $X = Y \in B(F)$ is a two-sided inverse of $A$ in $B(F)$.
\end{proof}

Recall that the algebra $\CF = \End_F(V)$ is (\emph{von Neumann}) \emph{regular} (e.g., \cite[Proposition 4.27]{lam}), meaning that for any $A \in \CF$, there is some $X \in \CF$ such that $AXA = A$.  In a regular ring $R$ with identity, an element $a$ has a left (resp., right) inverse if and only if its right (resp., left) annihilator is zero.  To see ``if'', choose $x\in R$ with $axa = a$.  Then $a(xa-1) = 0$, so the condition $\rann_R(a) = 0$ implies $xa-1=0$, making $x$ a left inverse of $a$.  The reverse implication is clear.

\begin{lemma}  \label{invert.in.cfm}
A matrix $A \in \CF$ has a left inverse in $\CF$ if and only if $\rann_V(A) = 0$, while $A$ has a right inverse in $\CF$ if and only if $\lann_{W^t}(A) = 0$.
\end{lemma}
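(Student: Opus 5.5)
The plan is to use the regularity of $\CF$ together with the identification $\CF = \End_F(V)$ and the identification of $W^t$ with the dual space $\Hom_F(V,F)$. By the observation just made about regular rings, $A$ has a left inverse in $\CF$ if and only if $\rann_{\CF}(A) = 0$, and a right inverse if and only if $\lann_{\CF}(A) = 0$. So the task reduces to translating these ring-theoretic annihilator conditions into the vector conditions $\rann_V(A)=0$ and $\lann_{W^t}(A)=0$.

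\textbf{Left inverses.} If $XA = I$ with $X\in\CF$ and $v\in V$ satisfies $Av=0$, then associativity of the $\CF$-action on $V$ gives $v = (XA)v = X(Av) = 0$. Conversely, suppose $\rann_V(A)=0$ and $AB=0$ for some $B\in\CF$. Each column $b_j$ of $B$ lies in $V$ and satisfies $Ab_j=0$, so $b_j=0$ and hence $B=0$. Thus $\rann_{\CF}(A)=0$, and regularity supplies the left inverse.

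\textbf{Right inverses.} If $AY=I$ with $Y\in\CF$ and $f\in W^t$ satisfies $fA=0$, then associativity of the right $\CF$-action on $W^t$ gives $f = f(AY) = (fA)Y = 0$. For the converse, suppose $\lann_{W^t}(A)=0$ and $BA=0$ for some $B\in\CF$. Each row $r_i$ of $B$ lies in $W^t$, since rows of column-finite matrices may be arbitrary. By the definition of the product, $r_iA$ is the $i$-th row of $BA$, which is $0$. Hence $r_i=0$ for all $i$, so $B=0$ and $\lann_{\CF}(A)=0$. Regularity then gives $X\in\CF$ with $AXA=A$, so $(AX-1)A=0$ forces $AX=I$.

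\textbf{Expected obstacle.} The only point needing care is the right-inverse direction. There one must use $W^t$ rather than $V^t$, because rows of matrices in $\CF$ need not be finitely supported. One must also check that the product $r_iA$ is defined and equals the $i$-th row of $BA$. This holds because $A$ is column-finite, so each entry of $r_iA$ is a finite sum. The associativity $f(AY)=(fA)Y$ is exactly the right $\CF$-module structure on $W^t$ recorded in \S\ref{NotaConv}.
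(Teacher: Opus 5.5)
Your proof is correct and follows essentially the paper's route: regularity of $\CF$ reduces the question to $\rann_{\CF}(A)=0$ (resp.\ $\lann_{\CF}(A)=0$), and you translate this through the columns (resp.\ rows) of matrices in $\CF$, which lie in $V$ (resp.\ $W^t$). The only difference is minor: you prove the ``only if'' directions directly from the module associativity of $V$ and $W^t$, whereas the paper gets them from the same column/row description of the ring annihilators, by placing a nonzero annihilating vector as a column or row of a matrix in $\CF$.
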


\begin{proof}
Since $\CF$ is regular, the matrix $A$ has a left inverse in $\CF$ if and only if $\rann_{\CF}(A) = 0$.  Note that $\rann_{\CF}(A)$ consists of those matrices in $\CF$ all of whose columns lie in $\rann_V(A)$.  Thus, $\rann_{\CF}(A) = 0$ if and only if $\rann_V(A) = 0$.

The proof for right inverses is analogous.
\end{proof}

\begin{corollary}  \label{BF.invertible}  
{\rm (\cite{Wan1}, \cite[Theorem A]{Wan2})}
A matrix $A \in B(F)$ is invertible in $B(F)$ if and only if $\rann_W(A)$ and $\lann_{W^t}(A)$ are both zero.
\end{corollary}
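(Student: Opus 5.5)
The plan is to combine Lemma \ref{invert.in.cfm} with Proposition \ref{cor:units-go-well}, using the transpose to convert statements about $\RF$ into statements about $\CF$.

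First, suppose $A \in B(F)$ is invertible in $B(F)$, with inverse $X$.
\begin{itemize}
\item[] If $w \in W$ satisfies $Aw = 0$, then $w = (XA)w = X(Aw) = 0$. This uses associativity of the left $\RF$-action on $W$, which is legitimate because $X$ and $A$ are row-finite.
\item[] Similarly, if $f \in W^t$ satisfies $fA = 0$, then $f = f(AX) = (fA)X = 0$, by associativity of the right $\CF$-action on $W^t$.
\end{itemize}
Hence $\rann_W(A) = 0$ and $\lann_{W^t}(A) = 0$.

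Conversely, assume both annihilators vanish. We produce a right inverse in $\CF$ and a left inverse in $\RF$, and then invoke Proposition \ref{cor:units-go-well}.

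\textbf{Right inverse in $\CF$.} Since $A \in \CF$ and $\lann_{W^t}(A) = 0$, Lemma \ref{invert.in.cfm} gives $Y \in \CF$ with $AY = I$.

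\textbf{Left inverse in $\RF$.} Pass to the transpose $A^t \in B(F) \subseteq \CF$. A row vector $f = w^t \in W^t$ satisfies $fA^t = 0$ iff $(Aw)^t = 0$, i.e.\ iff $Aw = 0$. Here we use that the transpose of a product of matrices is the product of the transposes in reverse order, whenever the product is defined; this holds since row-by-column products simply become column-by-row products entrywise. So $\lann_{W^t}(A^t) = \rann_W(A)^t = 0$.

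Applying Lemma \ref{invert.in.cfm} to $A^t \in \CF$ gives $Z \in \CF$ with $A^tZ = I$. Transposing, $Z^tA = I$ with $Z^t \in \RF$.

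\textbf{Conclusion.} We now have $X := Z^t \in \RF$ and $Y \in \CF$ with $XA = I = AY$. Proposition \ref{cor:units-go-well} then yields $X = Y \in B(F)$, a two-sided inverse of $A$ in $B(F)$.

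The only step needing care is the transposition bookkeeping. We must check that the hypothesis on $\rann_W$ corresponds exactly to the $W^t$-annihilator hypothesis of Lemma \ref{invert.in.cfm} for $A^t$. We must also confirm that transposition preserves products between row-finite or column-finite matrices and arbitrary vectors. Both are entrywise checks. The substantive content, namely regularity of $\CF$ and \RMC-associativity, is already packaged in the cited results.
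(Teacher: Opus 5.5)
Your proof is correct and follows essentially the same route as the paper. The forward direction uses the module associativity of $W$ and $W^t$, and the converse applies Lemma \ref{invert.in.cfm} to $A$ and to $A^t$ and finishes with \RMC-associativity. The one difference is minor: the paper extracts full two-sided inverses of $A$ and $A^t$ in $\CF$ (so it also invokes the $\rann_V$ conditions), whereas you only need the right-inverse half of the lemma before concluding via Proposition \ref{cor:units-go-well}.
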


\begin{proof}
Suppose $A$ has an inverse $B \in B(F)$.  If $x \in W$ and $Ax = 0$, then using associativity for the $\RF$-module $W$, we obtain $x = BAx = 0$.  Similarly, if $y \in W^t$ and $yA = 0$, then using associativity for the $\CF$-module $W^t$ we obtain $y = yAB = 0$.  Thus the required annihilators are zero.

Conversely, assume that the stated annihilator conditions hold.  In particular, $\rann_V(A) = 0$ and $\lann_{W^t}(A) = 0$, so Lemma \ref{invert.in.cfm} shows that $A$ has an inverse $B \in \CF$.  By transposition, we also have $\rann_V(A^t) = 0$ and $\lann_{W^t}(A^t) = 0$, so Lemma \ref{invert.in.cfm} shows that $A^t$ has an inverse $C \in \CF$.  Hence, $C^t$ is an inverse for $A$ in $\RF$.  Using \RMC-associativity, we conclude that $C^t = C^tAB = B$, and therefore $B \in B(F)$.
\end{proof}

\begin{remark}  \label{no.weaken.BF.invert}
Neither of the annihilator conditions in Corollary \ref{BF.invertible} can be weakened with $W$ replaced by $V$.  For instance, take $A\in B(F)$ as in Example \ref{l&r.inverse.not.match}.  Since $A$ is invertible in $\CF$, we have $\rann_V(A) = 0$ and $\lann_{W^t}(A) = 0$ (Lemma \ref{invert.in.cfm}).  However, the inverse of $A$ in $\CF$, namely $X$, does not lie in the subalgebra $B(F)$, and so $A$ is not invertible in $B(F)$.

Symmetrically, $\rann_W(A^t) = 0$ and $\lann_{V^t}(A^t) = 0$, but $A^t$ is not invertible in $B(F)$.

The matrix $X \in B(F)$ of Example \ref{ex.bergman1} provides an example for which $\rann_V(X)\allowbreak = 0$ and $\lann_{V^t}(X) = 0$ but $X$ is not even right or left invertible in $B(F)$.  These annihilator conditions follow from $XA = I = AX$ (with $A\in M$ as in the example), due to the associativity for $M$ as a right or left $B(F)$-module.  On the other hand, $\rann_W(X)$ and $\lann_{W^t}(X)$ are both nonzero, for if $w := [1,2,3,\dots]^t \in W$, then $Xw = 0$ and $w^tX = 0$.  Thus, by Corollary \ref{BF.invertible}, $X$ is not invertible in $B(F)$.  In fact, Lemma \ref{invert.in.cfm} shows that $X$ cannot have a right inverse in $\CF$, and by symmetry it cannot have a left inverse in $\RF$. 

The last example also provides another instance of non-associativity, since $A(Xw) = 0 \ne (AX)w$ and $(w^tX)A = 0 \ne w^t(XA)$.
\end{remark}
	
	We now obtain a result about invertibility of self-adjoint elements.
	
\begin{proposition}
		\label{prop:sa-invertibles}
Let $*$ be an involution on $F$, and let $\smallsharp$ be the $*$-transpose involution on $B(F)$.
Suppose that $A\in B(F)$ is self-adjoint with respect to $\smallsharp$, i.e., $A^{\#} = A$. Then $A$ is invertible in $B(F)$ if and only if it is invertible in $\CF$ if and only if it is right invertible in $\CF$.
	\end{proposition}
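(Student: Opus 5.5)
The chain of implications runs "invertible in $B(F)$" $\Rightarrow$ "invertible in $\CF$" $\Rightarrow$ "right invertible in $\CF$" $\Rightarrow$ "invertible in $B(F)$". The first two implications are trivial, since $B(F)\subseteq \CF$. All the work is in the last one, and there I would use Corollary \ref{BF.invertible}: it suffices to show that $\rann_W(A)=0$ and $\lann_{W^t}(A)=0$.

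First, suppose $AY=I$ with $Y\in\CF$. Lemma \ref{invert.in.cfm} then gives $\lann_{W^t}(A)=0$. This takes care of one of the two conditions.

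For the other condition, I would transfer it through the involution. Write $x\mapsto \bar x$ for the involution $*$ on $F$, applied entrywise to vectors, so that $B^{\#}=\bar B^t$. Let $w\in W$ with $Aw=0$, and set $f:=\bar w^t\in W^t$. Conjugating and transposing the identity $Aw=0$ entrywise, I expect to get $\bar w^t\bar A^t=0$, i.e.\ $fA^{\#}=0$, i.e.\ $fA=0$, using $A^{\#}=A$. The justification is that each entry of $Aw$ is the finite sum $\sum_j a_{ij}w_j$ (finite because $A$ is row-finite). Applying $*$ to it gives $\sum_j \bar w_j\bar a_{ij}$, which is exactly the $i$-th entry of $\bar w^t\bar A^t$, since that product uses column $i$ of $\bar A^t$, i.e.\ row $i$ of $\bar A$. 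Here I must use that $*$ is a ring anti-automorphism of $F$; since $F$ is commutative it is just an automorphism of order at most $2$, so the order of the factors does not matter. Then $fA=0$ together with $\lann_{W^t}(A)=0$ forces $f=0$, hence $\bar w=0$ and so $w=0$. Thus $\rann_W(A)=0$, and Corollary \ref{BF.invertible} shows $A$ is invertible in $B(F)$.

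The only delicate point is this entrywise bookkeeping. We need to check that $\bar w^t$ really lies in $W^t$, which is clear since $W^t$ is all row vectors. We also need the products to be defined: $A$ is row-finite, so $A$ acts on $W$, and $A$ is column-finite, so $W^t$ is a right module over it, which makes the row-vector product defined. No associativity issues arise, because only single products are involved.
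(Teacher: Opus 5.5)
Your proof is correct, but it follows a different route from the paper's.

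\textbf{The paper's route.} It applies the $*$-transpose directly to the matrix equation $AX=I$ with $X\in\CF$. This gives $X^{\#}A=I$ with $X^{\#}\in\RF$. It then invokes Proposition \ref{cor:units-go-well}, i.e.\ \RMC-associativity in the form $X^{\#}=X^{\#}(AX)=(X^{\#}A)X=X$, to conclude that $X=X^{\#}\in B(F)$.

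\textbf{Your route.} You transport annihilator conditions instead of the inverse itself. Right invertibility gives $\lann_{W^t}(A)=0$ by Lemma \ref{invert.in.cfm}. Self-adjointness converts $Aw=0$ into $\bar w^{t}A=0$, so $\rann_W(A)=0$. Corollary \ref{BF.invertible} then finishes. Your entrywise bookkeeping is sound: the sums are finite because $A$ is row- and column-finite, and $*$ is a field automorphism. So there is no gap.

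\textbf{What each approach buys.} The paper's argument is a one-liner that needs only \RMC-associativity. As a bonus, it shows that the given right inverse $X$ itself lies in $B(F)$ and is self-adjoint. Yours relies on heavier machinery: Lemma \ref{invert.in.cfm} rests on von Neumann regularity of $\CF$, and Corollary \ref{BF.invertible} uses that lemma twice. In exchange, you never need to extend $\smallsharp$ to a map $\CF\to\RF$ or check $(AX)^{\#}=X^{\#}A^{\#}$ for mixed products. You also isolate a reusable fact: for $A=A^{\#}$, the map $w\mapsto \bar w^{t}$ is a bijection from $\rann_W(A)$ onto $\lann_{W^t}(A)$.
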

	
	\begin{proof}
Suppose that $A$ is right invertible in $\CF$, with right inverse $X$. Then $AX=I$ implies $X^{\#}A=I$, where $X^{\#}\in \RF$ is the $*$-transpose of $X$. By
Proposition \ref{cor:units-go-well} we get $X^{\#}=X\in B(F)$ and so $A$ is invertible in $B(F)$.  The remaining implications are immediate.
	\end{proof}
	
Note that it is right invertibility in $\CF$ that is important here, not left invertibility. In the upcoming Example \ref{E:no eigenvalues}, $X$ is a symmetric matrix in $B(F)$ which is left invertible in $\CF$ (because $\rann_V(X) = 0$) but is not right invertible in $\CF$.

\subsection{Upper triangular matrices}

\begin{lemma}  \label{ut.invertible}
Let $A \in \CF$ be upper triangular.  Then $A$ is invertible in $\CF$ if and only if $A$ has a left inverse in $\CF$, if and only if all the diagonal entries of $A$ are nonzero.  In this case, $A^{-1}$ is upper triangular.
\end{lemma}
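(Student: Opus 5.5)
The plan is to prove the cycle of implications: invertible in $\CF$ $\Rightarrow$ left invertible in $\CF$ $\Rightarrow$ all diagonal entries nonzero $\Rightarrow$ invertible in $\CF$ with upper triangular inverse. The first implication is trivial.

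\emph{Left invertible implies nonzero diagonal.} Suppose $A$ has a left inverse in $\CF$. Then $\rann_V(A)=0$ by Lemma \ref{invert.in.cfm}, or directly from associativity of the $\CF$-action on $V$. Suppose some diagonal entry vanishes, and let $a_{nn}=0$ with $n$ minimal. The first $n$ columns of $A$ are upper triangular with support in rows $1,\dots,n$. Column $n$ actually has support in rows $1,\dots,n-1$. So the first $n$ columns all lie in $\operatorname{span}(v_1,\dots,v_{n-1})$, where $v_j$ are the standard basis vectors of $V$. Hence there are $n$ vectors in an $(n-1)$-dimensional space, so some nonzero $x\in V$ supported in $[1,n]$ has $Ax=0$, a contradiction. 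The same argument works for any $n$, without needing minimality.

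\emph{Nonzero diagonal implies invertible.} Assume $a_{jj}\neq 0$ for all $j$. Inverting the finite upper triangular corner $A_n$ (the upper-left $n\times n$ block) for each $n$, I build $B$ column by column by back substitution. For the $j$-th column $b_j$, solve $Ab_j=v_j$ with $b_j$ supported in $[1,j]$. Upper triangularity reduces this to the finite system $A_j b_j' = e_j$, where $b_j'$ is the top part of $b_j$ and $e_j$ is the last standard basis vector of $F^j$. This system has the unique solution $A_j^{-1}e_j$. The resulting $B$ is column-finite and upper triangular, and $AB=I$, since the $j$-th column of $AB$ is $Ab_j$; here $B\in\CF$ and the product is computed columnwise in $\CF$.

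It remains to show $BA=I$. The $(i,k)$ entry of $BA$ only involves indices between $i$ and $k$. So the upper-left $n\times n$ corner of $BA$ equals $B_nA_n$, because both matrices are upper triangular. Likewise the corner of $AB$ equals $A_nB_n$, so $A_nB_n=I_n$. Then $B_nA_n=I_n$ by finite-dimensional linear algebra, and since $n$ is arbitrary, $BA=I$. This also gives uniqueness, so $A^{-1}=B$ is upper triangular. Alternatively, once $AB=I$ is known, one can note that $\rann_V(A)=0$ holds because the diagonal is nonzero, by the same triangular argument. Then Lemma \ref{invert.in.cfm} gives a left inverse $X$, and $X=X(AB)=(XA)B=B$ by associativity in $\CF$.

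The only genuinely delicate point is keeping all products inside $\CF$, so that associativity is legitimate. The corner-block reduction sidesteps any infinite-matrix associativity issues, since upper triangularity makes every entry of each product a finite sum over indices in $[i,k]$.
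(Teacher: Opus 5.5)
Your proof is correct. For the necessity direction it is the paper's argument with columns in place of rows: the paper uses a codimension count to find a nonzero $z$ supported in $[1,n]$ that is killed by every row of $A$, which is the same as your observation that the first $n$ columns lie in an $(n-1)$-dimensional span.

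The genuine difference is in sufficiency. The paper checks $\rann_V(A)=0$ and $\lann_{W^t}(A)=0$ directly from the nonzero diagonal, then invokes Lemma~\ref{invert.in.cfm} (hence the von Neumann regularity of $\CF$) to obtain a two-sided inverse. It then needs a separate argument for upper triangularity of $A^{-1}$: take the leftmost nonzero entry $b_{ij}$, $j<i$, in a row of $A^{-1}$ and note that $(A^{-1}A)_{ij}=b_{ij}a_{jj}\neq 0$.

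You instead build the inverse explicitly by back substitution. You then verify $BA=I$ through the finite corners, using that for upper triangular factors the principal $n\times n$ block of a product is the product of the blocks.

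Your route is self-contained and constructive. It avoids the regularity of $\CF$ and the dual space $W^t$ entirely, gives upper triangularity of $A^{-1}$ for free, and additionally shows that the principal blocks of $A^{-1}$ are exactly the $A_n^{-1}$. The paper's route is shorter once Lemma~\ref{invert.in.cfm} is available, and it fits the annihilator-based invertibility criteria used throughout (e.g., Corollary~\ref{BF.invertible}).
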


\begin{proof}
Write $A = (a_{ij})$, and for $i>0$ let $r_i$ (resp., $c_i$) denote the $i$th row (resp., $i$th column) of $A$.

Suppose some $a_{nn} = 0$.  Set $Z_n := \{ z \in V \mid z_i=0\ \forall\ i>n \}$, and note (using $a_{nn} = 0$) that $r_i Z_n = 0$ for $i \ge n$.  Since $Z_n \cap \bigcap_{i=1}^{n-1} \rann_V (r_i)$ has codimension at most $n-1$ in $Z_n$, there is a nonzero vector $z \in Z_n$ such that $r_i z = 0$ for $i=1,\dots,n-1$.  Thus $r_i z = 0$ for all $i$, and so $Az = 0$.  Lemma \ref{invert.in.cfm} then shows that $A$ cannot have a left inverse in $\CF$.

Now assume all $a_{nn} \ne 0$.  If $z \in V$ is nonzero, there is an index $n$ such that $z_n \ne 0$ and $z_i = 0$ for all $i>n$, whence $r_n z = a_{nn} z_n \ne 0$ and $Az \ne 0$.  Thus $\rann_V(A) = 0$.  If $w \in W^t$ is nonzero, there is an index $n$ such that $w_n \ne 0$ and $w_j = 0$ for $j<n$, whence $wc_n = w_n a_{nn} \ne 0$, and so $wA \ne 0$.  Thus $\lann_{W^t}(A) = 0$. By Lemma \ref{invert.in.cfm}, $A$ is invertible in $\CF$.

This establishes the stated equivalences.

Now assume that $A$ is invertible in $\CF$.  If $A^{-1} = (b_{ij})$  is not upper triangular, $b_{ij} \ne 0$ for some $i>j$.  Without loss of generality, $b_{ik} = 0$ for $k<j$.  But then $ (A^{-1}A)_{ij} = b_{ij} a_{jj} \ne 0$, which is impossible.  Therefore $A^{-1}$ is upper triangular.
\end{proof}

\begin{remark}  \label{ut.notBFinvert}
If $A \in B(F)$ is upper triangular with nonzero diagonal entries, it need not be invertible in $B(F)$.  For example, take $A \in B(F)$ as in Example \ref{l&r.inverse.not.match}, so that $A$ is invertible in $\CF$ with inverse $X \notin B(F)$.  Since $B(F)$ is a subalgebra of $\CF$, the matrix $A$ cannot be invertible in $B(F)$.

Compare with Corollary \ref{BF.invertible}: $\lann_{W^t}(A)$ is zero, whereas $\rann_W(A)$ contains $[1,-1,1,-1,\dots]^t$.
\end{remark}

\subsection{Eigenvalues}
We next exhibit some matrices in $B(F)$ which have no eigenvalues (or only one) for their action on $V$, even if $F$ is algebraically closed.  A simple example to keep in mind is the shift matrix
$$
A :=
\renewcommand{\arraystretch}{1.2}
\begin{bmatrix}  0&1&0&0&\cdots\\  0&0&1&0\\  0&0&0&1\\  \vdots&&&\ddots&\ddots
\end{bmatrix},
$$
since $0$ is the only eigenvalue for the action of $A$ on $V$,  while every $\lambda \in F$ is an eigenvalue for the action on $W$ (with eigenvector $[\lambda,\lambda^2,\lambda^3,\dots]^t$).

\begin{example}\label{E:no eigenvalues} Consider the symmetric, block tridiagonal matrix in $B(F)$
$$
Y_\lambda \ \ := \ \ 
\renewcommand{\arraystretch}{1.7}
\begin{bmatrix}
A & C & & & & \\
B & A & C &  & & \\
& B & A & C & & \\
&&   \ddots & \ddots & \ddots & &
\end{bmatrix}
$$
where
\[
A \ :=   \ \left[
\begin{array}{cc}
  \lambda & 1 \\
  1 &  \lambda
  \end{array} \right], \ \ \
B \ := \ \left[
\begin{array}{ccc}
  0 & 0 \\
  1 & 0
  \end{array} \right], \ \ \
C \ := \ \left[
\begin{array}{ccc}
  0 & 1 \\
  0 & 0
  \end{array} \right]
\]
and $\lambda \in F$.
It is easily checked that the columns of $Y_\lambda$ are linearly independent irrespective of the scalar $\lambda$, i.e., $\rann_V(Y_\lambda) = 0$, making $Y_\lambda$  a left invertible matrix of $\CF$ (Lemma \ref{invert.in.cfm}). 

So taking $X := Y_0$ gives a matrix with no eigenvalues (relative to its action on $V$) because the equation $(X - \mu I)v = 0$ for any scalar $\mu$, i.e., $Y_{-\mu}v = 0$, has no nonzero solutions $v \in V$. 

On the other hand, $\lann_{W^t}(X) \neq 0$ because $z = [0,1,0,-1,0,1,0,-1,\ldots ]$ left annihilates $X$, and so $X$ is not invertible in $\CF$.
\end{example}

Our next example illustrates the contrast between actions on $V$ and on $W$, by giving a matrix $A \in B(F)$ for which the action on $V$ has no eigenvalues, while the action on $W$ has all scalars as eigenvalues.

\begin{example}  \label{tridiag.no.eigenval}
Take
$$
A := 
\renewcommand{\arraystretch}{1.5}
\begin{bmatrix} 1&1&0&0&0&\cdots\\  1&2&1&0&0\\  0&1&2&1&0\\  \vdots &&&\ddots \end{bmatrix} \in B(F).
$$
If $x \in W$ and $\lambda \in F$, then $Ax = \lambda x$ if and only if
\begin{equation}  \label{121eigenvec}
x_1+x_2 = \lambda x_1 \qquad\text{and}\qquad x_{k-1}+2x_k+x_{k+1} = \lambda x_k \quad \forall\; k>1.
\end{equation}
On one hand, \eqref{121eigenvec} requires $x_k = (\lambda-2)x_{k+1} - x_{k+2}$ for all $k>0$, which forces $x=0$ if $x \in V$.  Thus $A$ has no eigenvectors in $V$, so its action on $V$ has no eigenvalues.

On the other hand, if $0 \ne x_1 \in F$ and $x_2 := (\lambda-1)x_1$, while $x_3,x_4,\dots$ are chosen recursively with $x_k = (\lambda-2)x_{k-1} - x_{k-2}$ for $k \ge 3$, then $x \ne 0$ and \eqref{121eigenvec} is satisfied.  Therefore all $\lambda \in F$ are eigenvalues for the action of $A$ on $W$.

In particular, there exist nonzero $x \in W$ with $Ax = 0$, and so $A$ is not invertible in $B(F)$ (Corollary \ref{BF.invertible}).  Moreover, $x^tA = (Ax)^t = 0$ with $x^t \in W^t$, so $A$ is not even invertible in $\CF$ (Lemma \ref{invert.in.cfm}).
\end{example}

                                %
                                %
\section{Positive definite matrices}  \label{sec:posdef}

One of the most marketable classes of finite real or complex matrices consists of
the Hermitian matrices that are also positive definite (or semidefinite). They have many applications across a broad range of disciplines (see Chapter 7 of the Horn and Johnson text \cite{HJ}). In this section we examine the manners of some of their infinite cousins.

For Sections \ref{sec:posdef}--\ref{sec:squareroot}, we restrict our base field to $F = \RR$ or $F = \CC$, and we denote the standard involution on matrices by $*$, i.e., $*=$ transpose when $F=\RR$ and $*=$ conjugate transpose when $F=\CC$.

The standard definition of a positive definite real or complex matrix $A$ is a Hermitian (i.e., self-adjoint) $n \times n$ matrix that satisfies $x^*Ax > 0$ for all $0 \ne x \in F^n$ (as column vectors); equivalently, the inner product $\langle x, y \rangle := x^*Ay$ on $F^n$ is positive definite. Other characterizations are among the following equivalent conditions for $A \in M_n(F)$:
\begin{enumerate}
\item $A$ is positive definite.
\item $A$ is Hermitian and all principal minors of $A$ are positive.
\item $A$ is Hermitian and all eigenvalues of $A$ are real and positive.
\item $A$ has a Cholesky decomposition, i.e., $A = U^tU$ for some upper triangular $U \in M_n(F)$ whose diagonal entries are all real and positive.
\item $A$ has a nonsingular Hermitian square root, i.e., $A = S^2$ for some invertible Hermitian $S \in M_n(F)$.
\end{enumerate}
Behind conditions (3) and (5) is the fact that a Hermitian matrix is unitarily diagonalizable. This is all standard fare in undergraduate linear algebra courses.

These conditions may also be considered for infinite matrices, even -- in the case of (1)--(4) -- for matrices in $\minfr$.  Of course, the matrices $U$ and $S$ in (4) and (5) must be allowed to be infinite.  

\begin{definition}
A matrix $A \in \minfr$ is \emph{positive definite} if $A$ is Hermitian and $x^*Ax > 0$ for all nonzero $x \in V$.  Note that $x^*Ax$ is well defined without parentheses due to \RMC-associativity.  The positivity condition does not make sense for $x \in W$, since then $x^*Ax$ is typically undefined, even if $A \in B(F)$ (e.g., if $A=I$).
\end{definition}

It is known that conditions (1), (2), and (4) remain equivalent in the infinite setting (e.g., \cite[Lemma 3.1]{CWS}, \cite[Theorem 1]{Fin}).  We give different arguments for these equivalences in Lemma \ref{bigM.posdef.blocks} and Theorem \ref{posdef.vs.Chofac}.  However, conditions (3) and (5) are not equivalent to (1),(2),(4), even for Hermitian row- and column-finite matrices $A$, because:
\begin{itemize}
\item $A$ may have no eigenvalues at all for its action on $V$.
\item $A$ may have negative eigenvalues for its action on $W$.
\item $A$ may not be invertible.
\item $A$ may have no square roots.
\end{itemize}
We shall exhibit positive definite row- and column-finite matrices with these negative properties.  See Examples \ref{unit.posdef.no.eigenval}, \ref{2nd.unit.posdef.no.eigenval}, \ref{tridiag.posdef.no.eigenval}, \ref{tridiag.posdef.no.sqrt}, \ref{unit.posdef.no.sqrt}.

\begin{remark}  \label{ctc.posdef}
{\rm(1)} For $C \in \CF$, the product $C^*C \in \minfr$ is positive definite if and only if $\rann_V(C) = 0$.

If $C^*C$ is positive definite, then $x^*(C^*C)x > 0$ for nonzero $x \in V$, whence $Cx \ne 0$.  On the other hand, if $Cx \ne 0$ for nonzero $x \in V$, then $x^*(C^*C)x = (Cx)^*(Cx) = \Vert Cx \Vert^2 > 0$.  (Note \RMC-associativity is used here.)

{\rm(2)} Let $A \in \minfr$ be positive definite.  If $A$ has an eigenvector $x \in V$, then the corresponding eigenvalue $\lambda$ is real and positive.

This holds since $x^*Ax = \lambda x^*x$ with $x^*Ax$ and $x^*x$ both real and positive.
\end{remark}

\begin{example}  \label{unit.posdef.no.eigenval}
There exist positive definite matrices $A \in B(F)$ such that $A$ is invertible in $B(F)$ and its action on $V$ has no eigenvalues, while all nonzero scalars in $F$ are eigenvalues for its action on $W$.

For example, let
$$
U := 
\renewcommand{\arraystretch}{1.5}
\begin{bmatrix} 1&1&1&0&0&0&\cdots \\  0&1&1&0&0&0 \\  0&0&1&1&1&0 \\  0&0&0&1&1&0 \\  \vdots &&&&\ddots \end{bmatrix}
$$ 
and take $A := U^*U = U^tU$.  Observe that $U$ is invertible in $B(F)$, with
$$
U^{-1} = 
\renewcommand{\arraystretch}{1.5}
\begin{bmatrix} 1&-1&0&0&0&0&\cdots \\  0&1&-1&1&0&0 \\  0&0&1&-1&0&0 \\  0&0&0&1&-1&1 \\  \vdots &&&&\ddots \end{bmatrix}.
$$
Consequently, $A$ is invertible in $B(F)$, and $A$ is positive definite by Remark \ref{ctc.posdef}.  Now
$$
A = 
\renewcommand{\arraystretch}{1.5}
\begin{bmatrix} 1&1&1&0&0&0&0&0&\cdots \\
1&2&2&0&0&0&0&0 \\  1&2&3&1&1&0&0&0 \\
0&0&1&2&2&0&0&0 \\  0&0&1&2&3&1&1&0 \\
\vdots&&&&&\ddots  \end{bmatrix} .
$$
Row $2k$ of $A$ starts with $2k-2$ zeros, followed by $1,2,2$ and then zeros, while row $2k+1$ of $A$ (for $k>0$) starts with $2k-2$ zeros, followed by $1,2,3,1,1$ and then zeros.

Suppose $x\in V$ and $Ax = \lambda x$ for some $\lambda \in F$.  If $\lambda = 0$, then $x=0$ because $A$ is invertible.  Now assume that $\lambda \ne 0$, and fix $k>0$ such that $x_j = 0$ for $j>2k$.

Comparing entries $2k$ and $2k+1$ in $Ax = \lambda x$, we obtain $x_{2k-1}+2x_{2k} = \lambda x_{2k}$ and $x_{2k-1}+2x_{2k} = 0$, and hence $x_{2k-1} = x_{2k} = 0$.  Repeating this argument with $k$ replaced by $k-1,k-2,\dots,1$, we conclude that all $x_j=0$, whence $x=0$.

Therefore $A$ has no eigenvectors in $V$.
\medskip

Now let $\lambda \in F\setminus\{0\}$ be arbitrary.  We show there exists $x \in W$ with $Ax = \lambda x$.  Fix $x_1=1$ to start.  Comparing the first two entries in $Ax = \lambda x$ then yields the requirements
\begin{equation}  \label{x2x3}
x_2+x_3 = \lambda-1 \qquad\text{and}\qquad (2-\lambda)x_2+2x_3 = -1.
\end{equation}
Since $\det\begin{bmatrix} 1&1\\ 2-\lambda&2 \end{bmatrix} = \lambda \ne 0$, equations \eqref{x2x3} uniquely determine $x_2$ and $x_3$.  Now the third and fourth entries in $Ax = \lambda x$ require
$$
x_4+x_5 = -1-2x_2+(\lambda-3)x_3 \qquad\text{and}\qquad (2-\lambda)x_4+2x_5 = -x_3 \,,
$$
which uniquely determines $x_4$ and $x_5$.  Continuing recursively, there are unique $x_j$ for $j\ge2$ such that $Ax = \lambda x$.
\end{example}

\begin{example}  \label{2nd.unit.posdef.no.eigenval}
There exist positive definite matrices $Z \in B(F)$ (necessarily non-invertible) such that $Z$ has no eigenvalues for its action on $V$, while all scalars in $F$ are eigenvalues for its action on $W$.

Take $X := Y_0$ as in Example \ref{E:no eigenvalues}, so that $X$ has no eigenvalues for its action on $V$, and set $Z := X^2 = X^*X \in B(F)$.  Since $\rann_V(X) = 0$, $Z$ is positive definite.

Suppose $\lambda$ is an eigenvalue of $Z$ for its action on $V$.  Then $\lambda > 0$ (Remark \ref{ctc.posdef}), so $\lambda = \mu^2$ for some $\mu \in F$.  Since $X$ has no eigenvalues relative to $V$, the equation $(X-\mu I)(X+\mu I)v = (Z-\lambda I)v = 0$ with $v \in V$ implies, first, that $(X+\mu I)v = 0$, and then $v = 0$, a contradiction.

Therefore $Z$ has no eigenvalues relative to $V$.
\medskip

To compute eigenvectors in $W$, write $Z$ in block form as
$$
Z = 
\renewcommand{\arraystretch}{1.4}
\begin{bmatrix} S&I_2&0&0&\cdots \\  I_2&T&I_2&0 \\  0&I_2&T&I_2 \\
\vdots&&&\ddots \end{bmatrix}
$$
where $S := \begin{bmatrix} 2&0\\ 0&1 \end{bmatrix}$ and $T := \begin{bmatrix} 2&0\\ 0&2 \end{bmatrix}$.  Given $w \in W$, write $w = \begin{bmatrix} w_1\\ w_2\\ \vdots \end{bmatrix}$ where the $w_j$ are $2\times1$.  For $\lambda \in F$, we have $(Z - \lambda I)w = 0$ if and only if
\begin{equation}  \label{S.T.eqns}
Sw_1+w_2 = \lambda w_1 \quad\text{and}\quad w_{k-1}+Tw_k+w_{k+1} = \lambda w_k \quad \forall\; k>1.
\end{equation}
Choosing $w_1\ne 0$ and then defining $w_2,w_3,\dots$ recursively via \eqref{S.T.eqns}, we obtain a nonzero $w \in W$ with $Zw = \lambda w$.

Therefore all $\lambda \in F$ are eigenvalues for the action of $Z$ on $W$.
\end{example}

If we don't require invertibility in $B(F)$, there are easier examples of positive definite matrices with no eigenvalues on $V$:

\begin{example}  \label{tridiag.posdef.no.eigenval}  
Take $A$ as in Example \ref{tridiag.no.eigenval}.  Then $A$ is Hermitian but not invertible in $B(F)$ (nor even in $\CF$), and $A$ has no eigenvalues relative to $V$, but all $\lambda \in F$ are eigenvalues for $A$ relative to $W$.  Now take
$$
U := 
\renewcommand{\arraystretch}{1.4}
\begin{bmatrix} 1&1&0&0&\cdots\\  0&1&1&0\\ 0&0&1&1\\  \vdots &&&\ddots \end{bmatrix} \in \BR,
$$
and observe that $U^*U = U^tU = A$.  By Remark \ref{ut.notBFinvert}, $U$ is invertible in $\CF$, and so $\rann_V(U) = 0$.  Then Remark \ref{ctc.posdef} shows that $A$ is positive definite.  (One can also compute inductively that all principal minors of $A$ equal $1$.)
\end{example}

As usual, a \emph{principal block} of an infinite matrix $A$ is an upper left $n\times n$ submatrix, for any $n\in \Zpos$, and a \emph{principal minor} of $A$ is the determinant of any principal block.  For convenience, we sketch the following elementary, known result.

\begin{lemma}  \label{bigM.posdef.blocks}
A matrix $A \in \minfr$ is positive definite if and only if all principal blocks of $A$ are positive definite, if and only if all principal minors of $A$ are positive.
\end{lemma}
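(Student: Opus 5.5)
The plan is to reduce everything to the finite case, using the fact that vectors in $V$ have finite support. Write $A_n$ for the upper left $n\times n$ block of $A$, and identify $F^n$ with the subspace $V_n := \{x\in V \mid x_i = 0\ \forall\, i>n\}$. For $x\in V_n$ with truncation $\bar x\in F^n$, the scalar $x^*Ax$ involves only entries $a_{ij}$ with $i,j\le n$. Hence
\[
x^*Ax = \bar x^{\,*}A_n\bar x .
\]
This identity is a finite computation: the row $x^*$ and the column $x$ are finitely supported, so no associativity issue arises. It is consistent with the \RMC-associativity remark in the definition.

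For the first equivalence, suppose $A$ is positive definite. Then $A$ is Hermitian, so each $A_n$ is Hermitian. The displayed identity gives $\bar x^{\,*}A_n\bar x>0$ for every nonzero $\bar x\in F^n$, so each $A_n$ is positive definite. Conversely, suppose every $A_n$ is positive definite. Since the $(i,j)$ and $(j,i)$ entries of $A$ both lie in $A_n$ for $n\ge \max(i,j)$, the matrix $A$ is Hermitian. Any nonzero $x\in V$ lies in some $V_n$, so $x^*Ax=\bar x^{\,*}A_n\bar x>0$.

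For the second equivalence, suppose every $A_n$ is positive definite. Each principal block of $A_n$ is some $A_m$ with $m\le n$, so the finite criterion (1)$\Leftrightarrow$(2) from the list above gives $\det A_n>0$ for all $n$. Conversely, suppose all principal minors $\det A_n$ are positive. I will read the principal-minor condition as including Hermitian-ness of $A$, as in condition (2) for finite matrices; without it the statement is false, since for instance $\begin{bmatrix}1&1\\0&1\end{bmatrix}$ has positive principal minors. Then each $A_n$ is Hermitian, and its own principal minors are $\det A_1,\dots,\det A_n>0$. The finite Sylvester criterion now shows $A_n$ is positive definite.

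The only point needing care is this hypothesis: Sylvester's criterion requires symmetry. Apart from that, the argument is a straightforward reduction to $M_n(F)$.
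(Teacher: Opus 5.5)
Your proof is correct and follows essentially the same route as the paper: you identify $F^n$ with the vectors in $V$ supported in the first $n$ coordinates, so that $x^*Ax$ equals the finite quadratic form for $A_n$, and you apply the classical Sylvester criterion to each $A_n$. You are also right to read the principal-minor condition as including that $A$ is Hermitian; the paper leaves this implicit, in line with its finite condition (2).
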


\begin{proof}
Let $A_1,A_2,\dots$ denote the principal blocks of $A$.

Assume first that $A$ is positive definite, and let $n>0$.  If $x \in F^n$ is a nonzero column and $y \in V$ consists of $x$ followed by zeros, then $x^*A_nx = y^*Ay > 0$.  Thus $A_n$ is positive definite.

If all $A_n$ are positive definite, then reversing the above argument shows that $A$ is positive definite.

The remaining equivalence holds by applying the classical result to all $A_n$.
\end{proof}

                                %
                                %
\section{Cholesky factorization}  \label{sec:cholesky}

Here again $F=\RR$ or $\CC$, and $*$ denotes the standard involution.

\begin{definition} 
A \emph{Cholesky factorization} of $A\in \minfr$ is a factorization $A = U^*U$ where $U \in \CF$ is upper triangular with all diagonal entries real and positive. 
\end{definition} 

In the finite case, every positive definite matrix $A \in M_n(F)$ has a unique Cholesky factorization (e.g., \cite[Theorem 7.63]{Axl}), a result first published in 1924.

It is known, at least in the real case, that positive definite matrices in $\minfr$ also have unique Cholesky factorizations -- e.g., \cite[Lemma 3.1]{CWS} or \cite[Theorem 1]{Fin}.  We present a proof following the idea of \cite[Section 8.3, Theorem 3]{Nic}, by induction on sizes of principal blocks, since we will need the resulting block information in analyzing positive definite matrices in $B(F)$ (Theorem \ref{Ch.fac.inBR}).

\begin{theorem}  \label{posdef.vs.Chofac}
A matrix $A\in \minfr$ is positive definite if and only if $A$ has a Cholesky factorization. The Cholesky factorization of $A$ is unique.
\end{theorem}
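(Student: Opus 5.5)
The plan is to reduce everything to the finite Cholesky theorem applied to the principal blocks, using the observation that an upper triangular matrix is automatically column-finite. Write $A_n$ for the $n\times n$ principal block of $A$ and $U_n$ for that of a candidate $U$. The key identity is that for upper triangular $U \in \CF$, the product $U^*U$ satisfies $(U^*U)_n = U_n^*U_n$. The reason is that the $(i,j)$ entry of $U^*U$ is $\sum_k \overline{u_{ki}}u_{kj}$. Since $u_{ki}=0$ for $k>i$, this sum involves only indices $k\le \min(i,j)\le n$ whenever $i,j\le n$.

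For the direction ``Cholesky $\Rightarrow$ positive definite'', suppose $A=U^*U$ with $U$ upper triangular and having positive diagonal. Lemma \ref{ut.invertible} gives $\rann_V(U)=0$, and then Remark \ref{ctc.posdef}(1) shows that $A$ is positive definite. Alternatively, each $A_n=U_n^*U_n$ has $U_n$ invertible, so Lemma \ref{bigM.posdef.blocks} applies.

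For the converse, let $A$ be positive definite. By Lemma \ref{bigM.posdef.blocks}, every $A_n$ is positive definite, so each has a unique finite Cholesky factorization $A_n=V_n^*V_n$. The crux is compatibility: $V_{n+1}$ restricted to its upper-left $n\times n$ block equals $V_n$. Indeed, that block $W$ of $V_{n+1}$ is upper triangular with positive diagonal, and by the identity above, $W^*W = (V_{n+1}^*V_{n+1})_n = (A_{n+1})_n = A_n$. Finite uniqueness then forces $W=V_n$.

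Compatibility lets me define $U$ as the union of the $V_n$: the entry $u_{ij}$ is the $(i,j)$ entry of $V_n$ for any $n\ge\max(i,j)$. This $U$ is upper triangular with positive diagonal, hence column-finite, so $U^*U$ is defined. By the block identity, $(U^*U)_n=V_n^*V_n=A_n$ for every $n$, and therefore $U^*U=A$.

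Uniqueness comes from the same identity. If $A=U^*U$ is any Cholesky factorization, then $A_n=U_n^*U_n$ with $U_n$ upper triangular and positive on the diagonal. Finite uniqueness gives $U_n=V_n$ for all $n$, and hence $U$ is determined.

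The main point needing care is the justification that infinite products localize to principal blocks. This rests on triangularity: each entry of $U^*U$ is a finite sum over indices $k\le\min(i,j)$, so the product is well defined and computable blockwise. With that in hand, the rest is the induction on block sizes. If one wants to avoid citing finite uniqueness, it can be obtained directly by the standard block step. Write
$$A_{n+1}=\begin{bmatrix}A_n&b\\ b^*&c\end{bmatrix},$$
and take the last column of $V_{n+1}$ to be $\bigl[(V_n^*)^{-1}b;\ \sqrt{c-\Vert (V_n^*)^{-1}b\Vert^2}\bigr]$. Here positivity under the square root follows from $\det A_{n+1}>0$, and each choice in this step is forced, which gives uniqueness as well.
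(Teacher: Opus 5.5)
Your proof is correct and follows essentially the same route as the paper: finite Cholesky factorizations of the principal blocks, compatibility via finite uniqueness, gluing, and the triangularity observation $(U^*U)_n = U_n^*U_n$ for both existence and uniqueness. The only difference is that you get compatibility by restricting $V_{n+1}$ to its leading $n\times n$ block, whereas the paper extends $U_n$ explicitly by $c_n=(U_n^*)^{-1}c$ and $d_n$ and then invokes uniqueness; the paper reuses that explicit form in proving Theorem~\ref{Ch.fac.inBR}(a), and your closing remark supplies it as well.
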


\begin{proof}
$(\Longleftarrow)$:  If there is a Cholesky factorization $A = U^*U$, then $U$ is invertible in $\CF$ (Lemma \ref{ut.invertible}), whence $\rann_V(U) = 0$.  Then Remark \ref{ctc.posdef} shows that $A$ is positive definite.

$(\Longrightarrow)$: Assume $A$ is positive definite, and let $A_1,A_2,\dots$ denote the principal blocks of $A$.  Since each $A_n$ is positive definite (Lemma \ref{bigM.posdef.blocks}), it has a Cholesky factorization $A_n = U_n^*U_n$ in $M_n(F)$.

\textbf{Claim 1:} Each $U_{n+1} = \begin{bmatrix} U_n &c_n\\ 0&d_n \end{bmatrix}$ for some $c_n \in F^n$ and $d_n \in \RR_{>0}$.

Write $A_{n+1} = \begin{bmatrix} A_n&c\\ c^*&d \end{bmatrix}$ for some $c \in F^n$ and $d \in \RR$.  Set $c_n := (U_n^*)^{-1}c$ and $d_0 := d - c_n^*c_n \in \RR$, and observe that
$$
\begin{bmatrix} U_n^*&0\\ c_n^*&1 \end{bmatrix}  \begin{bmatrix} U_n&c_n\\ 0&d_0 \end{bmatrix} = A_{n+1} \,.
$$
Now $0 < \det A_{n+1} = (\det U_n^*)(\det U_n) d_0 = |\det U_n|^2 d_0$, whence $d_0>0$.  Setting $d_n := \sqrt{d_0} > 0$, we obtain
\begin{equation}  \label{block.Ch}
\begin{bmatrix} U_n^*&0\\ c_n^*&d_n \end{bmatrix}  \begin{bmatrix} U_n&c_n\\ 0&d_n \end{bmatrix} = A_{n+1} \,.
\end{equation}
Since $\begin{bmatrix} U_n&c_n\\ 0&d_n \end{bmatrix}$ is upper triangular with positive diagonal entries, \eqref{block.Ch} is a Cholesky factorization of $A_{n+1}$.  By uniqueness, $\begin{bmatrix} U_n&c_n\\ 0&d_n \end{bmatrix} = U_{n+1}$, establishing Claim 1.

In view of Claim 1, there is a matrix $U \in \minfr$ such that for all $n>0$, the principal $n\times n$ block of $U$ equals $U_n$.  Since all these blocks are upper triangular with positive diagonal entries, the same holds for $U$.  In particular, $U \in \CF$.

\textbf{Claim 2:} $U^*U = A$.

For $n>0$, we have
$$
U^*U = \begin{bmatrix} U_n^*&0\\ (-)&(-) \end{bmatrix} \begin{bmatrix} U_n&(-)\\ 0&(-) \end{bmatrix} = \begin{bmatrix} U_n^*U_n&(-)\\ (-)&(-) \end{bmatrix} = \begin{bmatrix} A_n&(-)\\ (-)&(-) \end{bmatrix}
$$
for suitable blocks $(-)$,
and so the principal $n\times n$ blocks of $U^*U$ and $A$ coincide.  This establishes Claim 2.

Therefore $A = U^*U$ is a Cholesky factorization of $A$.  Suppose that $A = U_1^*U_1$ is another Cholesky factorization.  For $n>0$, let $U_{1,n}$ denote the principal $n\times n$ block of $U_1$.  Comparing principal $n\times n$ blocks in $A = U_1^*U_1$, we obtain $A_n = U_{1,n}^*U_{1,n}$.  This is a Cholesky factorization for $A_n$, hence $U_{1,n} = U_n$ by uniqueness.  Thus $U_1 = U$, and therefore the Cholesky factorization of $A$ is unique.
\end{proof}

\begin{theorem}  \label{Ch.fac.inBR}
Let $A \in B(F)$ be positive definite with Cholesky factorization $A = U^*U$.

{\rm(a)} $U \in B(F)$.

{\rm(b)} $A$ has a left inverse in $\CF$ and $U$ is invertible in $\CF$.

{\rm(c)} $A$ is invertible in $B(F)$ if and only if $A$ has a left inverse in $B(F)$, if and only if $A$ is invertible in $\CF$, if and only if $U$ is invertible in $B(F)$.
\end{theorem}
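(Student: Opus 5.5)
The plan is to get row-finiteness of $U$ from the identity $U = (U^{-1})^*A$, justified by \RMC-associativity. Parts (b) and (c) should then follow by combining Lemma~\ref{invert.in.cfm}, Lemma~\ref{ut.invertible} and Proposition~\ref{prop:sa-invertibles}.

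\textbf{Part (a).} Since $U$ is upper triangular, each column of $U$ is supported on finitely many rows, so $U \in \CF$ automatically. The real issue is row-finiteness.
\begin{enumerate}
\item By Lemma~\ref{ut.invertible}, $U$ is invertible in $\CF$, and $U^{-1}$ is upper triangular.
\item Hence $(U^{-1})^*$ is lower triangular, and in particular row-finite.
\item The triple $(U^{-1})^*,\ U^*,\ U$ has type $\RF\times\minff\times\CF$, so \RMC-associativity (Proposition~\ref{RMC-assoc}) gives
\[
(U^{-1})^*A = (U^{-1})^*(U^*U) = \bigl((U^{-1})^*U^*\bigr)U = (UU^{-1})^*U = U.
\]
Here $(U^{-1})^*U^* = (UU^{-1})^*$ is just the entrywise identity for $*$ applied to a product in $\CF$.
\item $U$ is thus a product of two row-finite matrices, $(U^{-1})^*$ and $A$, so $U \in \RF$ and therefore $U \in B(F)$.
\end{enumerate}
This step is the only real obstacle. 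One must check that every product used is defined and that the associativity invoked is exactly of \RMC\ type.

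\textbf{Part (b).} Invertibility of $U$ in $\CF$ is Lemma~\ref{ut.invertible}, since the diagonal entries of $U$ are nonzero. For $A$, suppose $x \in V$ and $Ax = 0$. Then $x^*Ax = 0$, and positive definiteness forces $x = 0$. So $\rann_V(A) = 0$, and Lemma~\ref{invert.in.cfm} yields a left inverse of $A$ in $\CF$.

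\textbf{Part (c).} I would prove the cycle of implications below.
\begin{enumerate}
\item \emph{Invertible in $B(F)$ implies left invertible in $B(F)$.} This is trivial.
\item \emph{Left invertible in $B(F)$ implies invertible in $\CF$.} If $XA = I$ with $X \in B(F)$, applying $*$ gives $AX^* = I$ with $X^* \in B(F) \subseteq \CF$. So $A$ is right invertible in $\CF$, and Proposition~\ref{prop:sa-invertibles} (applied to the Hermitian matrix $A$) shows that $A$ is invertible in $B(F)$, hence in $\CF$.
\item \emph{Invertible in $\CF$ implies invertible in $B(F)$.} This is again Proposition~\ref{prop:sa-invertibles}.
\item \emph{$U$ invertible in $B(F)$ implies $A$ invertible in $B(F)$.} By (a), $U^* \in B(F)$, so $A = U^*U$ is a product of units of $B(F)$.
\item \emph{$A$ invertible in $B(F)$ implies $U$ invertible in $B(F)$.} The matrix $A^{-1}U^* \in B(F)$ satisfies $(A^{-1}U^*)U = A^{-1}A = I$, so it is a left inverse of $U$. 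Since $\CF$ is associative and $U$ has a two-sided inverse there, this left inverse equals $U^{-1}$. Hence $U^{-1} \in B(F)$.
\end{enumerate}
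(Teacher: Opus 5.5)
Your proof is correct, and in part (a) it takes a different route from the paper.

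The paper proves row-finiteness of $U$ by returning to the inductive construction in Theorem \ref{posdef.vs.Chofac}. If the first $k$ rows of $A$ vanish beyond column $m$, then for $n \ge m$ the new column $c$ of $A_{n+1}$ has its first $k$ entries zero. Since $(U_n^*)^{-1}$ is lower triangular, the same holds for $c_n = (U_n^*)^{-1}c$. So the first $k$ rows of $U$ never acquire nonzero entries beyond column $m$.

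You instead use the global identity $U = (U^{-1})^*A$, justified by \RMC-associativity. It displays row $i$ of $U$ as a combination of rows $1,\dots,i$ of $A$. The mechanism is the same (lower triangularity of the inverse of $U^*$), and the resulting support bound is identical.

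What your version buys: it is shorter, and it uses nothing about how $U$ was constructed. It needs only that $U \in \CF$ is upper triangular with nonzero diagonal and that $U^*U = A$ is row-finite. In particular, it shows that the block information the paper says it needs from Theorem \ref{posdef.vs.Chofac} is dispensable for this theorem. What the paper's version buys: it works entirely with finite principal blocks, so it never has to check which infinite products associate.

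Parts (b) and (c) match the paper up to bookkeeping. You route \emph{left invertible in $B(F)$} through Proposition \ref{prop:sa-invertibles} via $AX^* = I$ and then obtain $U^{-1} = A^{-1}U^*$. The paper instead gets $U^{-1} = BU^*$ directly from a left inverse $B$ of $A$ in $B(F)$.
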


\begin{proof}
(a) Of course $U$, being upper triangular, is column-finite.

Let $A_n$, $U_n$, etc.~be as in the proof of Theorem \ref{posdef.vs.Chofac}.

Given $k>0$, we show that the first $k$ rows of $U$ are finite.  

Write $A = (a_{ij})$ and choose $m \ge k$ such that $a_{ij} = 0$ for all $i \le k$ and $j>m$.  Consider $n \ge m$.  Each of the first $k$ rows of $A$ is zero from column $m+1$ on, hence  zero from column $n+1$ on.  Thus, the nonzero parts of these rows lie in $A_n$.  Writing $A_{n+1} = \begin{bmatrix} A_n&c\\ c^*&d \end{bmatrix}$, the first $k$ entries of $c$ must be zero.  Since $(U_n^*)^{-1}$ is lower triangular, the first $k$ entries of $c_n := (U_n^*)^{-1}c$ are zero, whence the nonzero parts of the first $k$ rows of $U_{n+1} = \begin{bmatrix} U_n &c_n\\ 0&d_n \end{bmatrix}$ lie in $U_n$.

In particular, the first $k$ rows of $U_{m+1}$ are zero from column $m+1$ on.  It follows by induction that for $n\ge m+1$, the first $k$ rows of $U_{n+1}$ are zero from column $m+1$ on.  Therefore the first $k$ rows of $U$ are finite, as desired.

(b) Lemma \ref{ut.invertible} implies that $U$ is invertible in $\CF$.  Since $A$ is positive definite, $\rann_V(A) = 0$, and so by Lemma \ref{invert.in.cfm}, $A$ has a left inverse in $\CF$.

(c) If $U$ is invertible in $B(F)$, then so is $U^*$, and thus $A = U^*U$ as well.

If $A$ has a left inverse $B$ in $B(F)$, then $BU^*U = I$, whence $U^{-1} = BU^* \in B(F)$.  The remaining equivalences follow from Proposition \ref{prop:sa-invertibles}.
\end{proof}

\begin{example}
The factors in a Cholesky factorization $A = U^*U$ in $B(F)$ are not necessarily invertible in $B(F)$.  

For example, take
$$
A := 
\renewcommand{\arraystretch}{1.5}
\begin{bmatrix} 1&1&0&0&0&\cdots\\  1&2&1&0&0\\  0&1&2&1&0\\  \vdots &&&\ddots \end{bmatrix} \in B(F)
$$
as in Example \ref{tridiag.no.eigenval}.  As noted in Example \ref{tridiag.posdef.no.eigenval}, $A$ is positive definite and $A = U^*U$ where
$$
U := 
\renewcommand{\arraystretch}{1.4}
\begin{bmatrix} 1&1&0&0&\cdots\\  0&1&1&0\\ 0&0&1&1\\  \vdots &&&\ddots \end{bmatrix} \in B(F)
$$
and $U$ is invertible in $\CF$ but not in $B(F)$.  Moreover, $A = U^*U$ is a Cholesky factorization.

Note that by Theorem \ref{Ch.fac.inBR}, $A$ is not invertible in $B(F)$.  In fact, we have $[1,-1,1,-1,\dots] A = 0$, and so $A$ is not even invertible in $\CF$ (as the theorem says).
\end{example}

Finite real or complex matrices with linearly independent columns have a \emph{QR-decom\-posi\-tion}, as uniquely a product of a matrix with orthonormal columns and an upper triangular matrix with positive diagonal entries.  In the square case, the first factor is an orthogonal matrix.  Some proofs of this are done as corollaries of the existence of Cholesky factorizations.  The same line can be carried out in $\CF$, carefully noting appropriate associativity in several steps.

\begin{theorem}  \label{QU.decomp}
Let $A \in \CF$ such that the columns of $A$ are linearly independent in $V$.  

{\rm(a)} $A = QU$ for some $Q,U \in \CF$ such that $Q^*Q = I$ and $U$ is upper triangular with real positive diagonal entries.  The matrices $Q$ and $U$ are uniquely determined by $A$.

{\rm(b)} If $A$ is invertible in $\CF$, then $Q$ is an invertible element of $B(F)$, and $Q^{-1} = Q^*$.

{\rm(c)} If $A$ is an invertible element of $B(F)$, then so are $Q$ and $U$.
\end{theorem}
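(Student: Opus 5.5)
Following the remark before the statement, I would derive the decomposition from the Cholesky factorization of $A^*A$. Since $A \in \CF$, $A^*$ is row-finite, so $A^*A \in \minfr$ is defined. By Remark \ref{ctc.posdef}(1) and $\rann_V(A)=0$ (the columns are independent), $A^*A$ is positive definite. Theorem \ref{posdef.vs.Chofac} then gives a unique Cholesky factorization $A^*A = U^*U$ with $U\in\CF$ upper triangular and positive real diagonal. By Lemma \ref{ut.invertible}, $U$ is invertible in $\CF$ with upper triangular inverse. Set $Q := AU^{-1}\in \CF$. Then $QU = A$ by associativity in the algebra $\CF$. For $Q^*Q = (U^{-1})^*A^*AU^{-1}$ the products mix $\RF$, $\minfr$ and $\CF$, so I bracket carefully. $(U^{-1})^* \in \RF$, $A^*A\in M$ and $U^{-1}\in\CF$, so \RMC-associativity gives $Q^*Q = (U^{-1})^*\bigl(U^*U\bigr)U^{-1}$. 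Inside this product everything lies in $\RF$ or $\CF$, and applying \RMC-associativity again gives $\bigl((U^{-1})^*U^*\bigr)\bigl(UU^{-1}\bigr)=I$. Also $(AU^{-1})^* = (U^{-1})^*A^*$ holds in $\RF$, since $*$ reverses products in $\CF$. For uniqueness, if $A=Q_1U_1$ with $Q_1^*Q_1=I$ and $U_1$ as stated, then $A^*A = U_1^*Q_1^*Q_1U_1 = U_1^*U_1$. This uses the same \RMC\ bookkeeping: $A^*A = (U_1^*Q_1^*)(Q_1U_1)$ with $U_1^*\in\RF$ and $U_1\in\CF$. Uniqueness of Cholesky then gives $U_1=U$, and $Q_1 = AU^{-1}$ after multiplying on the right by $U^{-1}$ in $\CF$.

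For (b), if $A$ is invertible in $\CF$ then so is $Q = AU^{-1}$, with inverse $UA^{-1}\in\CF$. On the other hand $Q^*\in\RF$ satisfies $Q^*Q = I$. Proposition \ref{cor:units-go-well}, applied to $Q$ with $X=Q^*\in\RF$ and $Y=Q^{-1}\in\CF$, gives $Q^* = Q^{-1}\in B(F)$. Hence $Q \in B(F)$ as well, since its transpose-conjugate does, and $Q$ is a unit of $B(F)$ with $Q^{-1}=Q^*$.

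For (c), if $A$ is a unit of $B(F)$ then it is invertible in $\CF$, so by (b) $Q$ is a unit of $B(F)$. Then $U = Q^{-1}A = Q^*A$ is a product of units of $B(F)$; this identity follows from $A=QU$ by left multiplication, valid because $Q^*\in B(F)$ and everything associates in $\CF$. Hence $U$ is a unit of $B(F)$.

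The main obstacle is the associativity bookkeeping in (a), specifically the computation of $Q^*Q$. The middle factor $A^*A$ is only known to be in $M$, so each regrouping must be justified as row-finite $\cdot$ arbitrary $\cdot$ column-finite. Example \ref{l&r.inverse.not.match} shows that a careless regrouping could fail.
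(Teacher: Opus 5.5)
Your proposal is correct and follows essentially the same route as the paper: Cholesky factorization of $A^*A$, $Q := AU^{-1}$, uniqueness via uniqueness of the Cholesky factor, and in (b) the left/right inverse argument, which the paper inlines as $Q^* = Q^*QQ^{-1} = Q^{-1}$ rather than citing Proposition~\ref{cor:units-go-well}. One cosmetic difference is in showing $Q^*Q = I$. The paper first computes $Q^*A = U$ and then $Q^*Q = (Q^*A)U^{-1} = I$. You instead regroup the four-fold product $(U^{-1})^*A^*AU^{-1}$ directly, which is equally valid by repeated \RMC-associativity, since every left factor is row-finite and every right factor is column-finite.
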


\begin{proof}
(a) By hypothesis, $\rann_V(A) = 0$, and so $A^*A$ is positive definite (Remark \ref{ctc.posdef}).  By Theorem \ref{posdef.vs.Chofac}, there is a Cholesky decomposition $A^*A = U^*U$, and $U$ is invertible in $\CF$ by Lemma \ref{ut.invertible}.  Set $Q := AU^{-1} \in \CF$, so that $A = QU$ and $Q^*A = (U^{-1})^* A^* A = (U^{-1})^* U^*U = U$ (using \RMC-associativity).  With further \RMC-associativity, we get $Q^*Q = (Q^*A)U^{-1} = I$.

Suppose also $A = Q_1U_1$ with $Q_1, U_1 \in \CF$ such that $Q_1^*Q_1 = I$ and $U_1$ is upper triangular with positive diagonal entries.  Using  \RMC-assoc\-ia\-tivity again, $U_1^*U_1 = U_1^*Q_1^*Q_1U_1 = A^*A$, so $U_1^*U_1$ is a Cholesky factorization of $A^*A$.  By uniqueness, $U_1 = U$, whence also $Q_1 = Q$.  Thus $Q$ and $U$ are unique.

(b) If $A$ is invertible in $\CF$, then so is $Q = AU^{-1}$, and by a final use of \RMC-associativity, $Q^* = Q^*QQ^{-1} = Q^{-1}$.  In particular, $Q = (Q^{-1})^*$ is row-finite, whence $Q \in B(F)$.  Then $Q^{-1} = Q^* \in B(F)$, and therefore $Q$ is invertible in $B(F)$.

(c) In view of (b), $U = Q^{-1}A \in B(F)$, and then also $U^{-1} = A^{-1}Q \in B(F)$.
\end{proof}

\begin{corollary}  \label{units.BR}
The units of $B(F)$ are the products $QU$ where $Q$ and $U$ are units of $B(F)$ with $Q^{-1} = Q^*$ and $U$ upper triangular with real positive diagonal entries.
\qed\end{corollary}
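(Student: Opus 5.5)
One inclusion is immediate. If $Q$ and $U$ are units of $B(F)$, then $QU$ is a unit of $B(F)$ because the units of a ring form a group. So the content of Corollary \ref{units.BR} is the converse, and I will derive it directly from Theorem \ref{QU.decomp}.

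Let $A$ be a unit of $B(F)$, with inverse $A^{-1}\in B(F)\subseteq \CF$.
\begin{enumerate}
\item Since $A$ has a left inverse in $\CF$, we have $\rann_V(A)=0$. In other words, the columns of $A$ are linearly independent in $V$, and they lie in $V$ because $A\in\CF$. So the hypothesis of Theorem \ref{QU.decomp} holds.
\item Theorem \ref{QU.decomp}(a) gives $A=QU$ with $Q,U\in\CF$, $Q^*Q=I$, and $U$ upper triangular with real positive diagonal entries.
\item Because $A$ is invertible in $\CF$ (its $B(F)$-inverse serves there), part (b) of the theorem shows that $Q$ is a unit of $B(F)$ with $Q^{-1}=Q^*$.
\item Because $A$ is a unit of $B(F)$, part (c) shows that $U$ is also a unit of $B(F)$.
\end{enumerate}
This exhibits $A$ in the required form.

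I expect no real obstacle, since all the associativity issues were already handled inside the proof of Theorem \ref{QU.decomp}. The one point to state explicitly is that invertibility in $B(F)$ implies invertibility in $\CF$. This holds simply because $B(F)$ is a subalgebra of $\CF$ with the same identity, so the inverse in $B(F)$ is also an inverse in $\CF$.
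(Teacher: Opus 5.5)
Your proposal is correct and matches the paper, which states this corollary with \qed as an immediate consequence of Theorem \ref{QU.decomp}(a)--(c). Your explicit checks, that a unit of $B(F)$ is invertible in $\CF$ so that $\rann_V(A)=0$, and that the reverse inclusion holds because units are closed under products, are exactly the details left implicit there.
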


                                %
                                %
\section{Square roots}  \label{sec:squareroot}

Here again $F=\RR$ or $\CC$, and $*$ denotes the standard involution.

As recalled above, the positive definite matrices in $M_n(F)$ are exactly the squares of invertible Hermitian matrices in $M_n(F)$.  Even omitting invertibility, this fails for infinite matrices.  

To set up a first example, we start with the following.

\begin{lemma}  \label{almost.triang.no.sqrt}
Let $A = (a_{ij}) \in \RF$ with $a_{ij} = 0$ whenever $j \ge i+2$ and $a_{i,i+1} \ne 0$ for all $i$.  

{\rm(a)} Every $\lambda \in F$ is an eigenvalue for $A$ on $W$, and the corresponding eigenspace $W_\lambda$ is $1$-dimensional.

{\rm(b)}  If $A \in \RF(\RR)$, then $A$ has no square root in $\RF(\RR)$ or $\RF(\CC)$.
\end{lemma}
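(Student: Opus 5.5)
The plan is to prove (a) by solving the eigenvector equations recursively, and to prove (b) by showing that a square root of $A$ would have to preserve each one-dimensional eigenspace $W_\lambda$, which forces a polynomial square root of $\lambda$.

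\emph{Part (a).} Since $a_{ij} = 0$ for $j \ge i+2$, row $i$ of the equation $Ax = \lambda x$ for $x \in W$ reads
$$\sum_{j \le i} a_{ij}x_j + a_{i,i+1}x_{i+1} = \lambda x_i .$$
Because $a_{i,i+1} \ne 0$, this can be solved for $x_{i+1}$ in terms of $x_1,\dots,x_i$. Hence every solution is uniquely determined by $x_1$.
\begin{itemize}
\item If $x_1 = 0$, the recursion forces $x = 0$.
\item If we set $x_1 = 1$ and define $x_2,x_3,\dots$ by the recursion, we obtain a nonzero eigenvector $x_\lambda \in W$. Every row equation is then satisfied by construction.
\end{itemize}
So $W_\lambda = F x_\lambda$ is $1$-dimensional. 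We also record what the recursion gives: the $k$th coordinate of $x_\lambda$ is a polynomial $p_k(\lambda)$ of degree $k-1$, with $p_1 = 1$. Its coefficients lie in $\RR$ when $A$ is real.

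\emph{Part (b).} Since $\RF(\RR) \subseteq \RF(\CC)$, it suffices to rule out $S \in \RF(\CC)$ with $S^2 = A$, working over $F = \CC$.
\begin{enumerate}
\item By associativity in $\RF$, $SA = S(SS) = (SS)S = AS$.
\item By associativity of $W$ as a left $\RF$-module, for $x \in W_\lambda$ we get $A(Sx) = (AS)x = (SA)x = S(Ax) = \lambda Sx$. Thus $Sx \in W_\lambda$.
\item By (a), this gives $Sx_\lambda = \mu(\lambda)x_\lambda$ for some scalar $\mu(\lambda)$. Then $\lambda x_\lambda = Ax_\lambda = S(Sx_\lambda) = \mu(\lambda)^2 x_\lambda$, again by module associativity, so $\mu(\lambda)^2 = \lambda$.
\item Compare first coordinates in $Sx_\lambda = \mu(\lambda)x_\lambda$. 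Row $1$ of $S$ has only finitely many nonzero entries $s_{11},\dots,s_{1N}$, and $x_\lambda$ has first coordinate $1$. Hence
$$\mu(\lambda) = \sum_{j=1}^N s_{1j}\,p_j(\lambda) =: q(\lambda),$$
where $q$ is a single polynomial, independent of $\lambda$.
\item Then $q(\lambda)^2 = \lambda$ for all $\lambda \in \CC$. This is impossible: the left side has even degree, or is the zero polynomial, while the right side has degree $1$.
\end{enumerate}

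\emph{Main obstacle.} The step needing most care is justifying the associativity used in steps 1–3, namely that $SA = AS$ and $S(Sx) = (SS)x$ for $x \in W$. Both hold because $\RF$ is an associative algebra acting associatively on $W$, as recorded in \S\ref{NotaConv}. The other key point is using row-finiteness of $S$ to make $\mu$ a polynomial in $\lambda$, which is what turns the pointwise relation $\mu(\lambda)^2 = \lambda$ into a contradiction.
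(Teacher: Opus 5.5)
Your argument is correct. Part (a) is the same recursion the paper uses, but for part (b) you take a genuinely different and shorter route.

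The paper first handles the real case with the single eigenvalue $-1$: a real $S$ would act on the line $W_{-1}$ by a real scalar $\mu$ with $\mu^2=-1$. For the complex case it proceeds in four steps.
\begin{enumerate}
\item It writes the square root as $S+iT$ with $S,T\in\RF(\RR)$ and derives $\sigma_\lambda^2-\tau_\lambda^2=\lambda$ and $\sigma_\lambda\tau_\lambda=0$.
\item It uses that $w_\lambda$ is real for real $\lambda$ to force $Sw_\lambda=0$ for $\lambda\le 0$ and $Tw_\lambda=0$ for $\lambda\ge 0$.
\item It kills every row of $S$ and of $T$, using $\deg f_j=j-1$ with nonzero leading coefficient.
\item This gives $A=S^2-T^2=0$, a contradiction.
\end{enumerate}

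You instead read the eigenvalue $\mu(\lambda)$ off the first coordinate. Because the first row of $S$ is finite and $p_1=1$, $\mu$ is given by a single polynomial $q$ in $\lambda$. Then $q(z)^2-z$ vanishes at infinitely many points, so it is the zero polynomial; say this explicitly. That is impossible by parity of degree.

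Your version has several advantages:
\begin{itemize}
\item it uses only one row of $S$;
\item it needs only that the coordinates of $x_\lambda$ are polynomials in $\lambda$, not their degrees or leading coefficients;
\item it avoids the real/imaginary splitting and the sign analysis;
\item it never uses that $A$ is real, so it proves more: no $A\in\RF(\CC)$ of this shape (indeed, over any infinite field $F$, no such $A\in\RF(F)$) has a square root in $\RF(F)$.
\end{itemize}
The paper's route has the merit that its real case is completely elementary. For the complex statement, yours is the more economical proof.
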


\begin{proof}
(a) Observe that a vector $w \in W$ satisfies $Aw = \lambda w$ if and only if 
$$
\lambda w_i = \sum_{j=1}^\infty a_{ij} w_j = \sum_{j=1}^{i+1} a_{ij} w_j \qquad \forall\; i,
$$
if and only if
\begin{equation}  \label{recur}
w_{i+1} = a_{i,i+1}^{-1} \left( \lambda w_i - \sum_{j=1}^i a_{ij} w_j \right) \qquad \forall\; i.
\end{equation}
Thus, choosing $w_1$ arbitrarily and defining $w_k$ for $k>1$ recursively via \eqref{recur}, we obtain $w \in W_\lambda$ with arbitrary $w_1$.  On the other hand, if $w,z \in W_\lambda$ with $w \ne 0$, it follows from \eqref{recur} that $w_1 \ne 0$ and $z = z_1 w_1^{-1} w$.  Thus $\dim W_\lambda = 1$, as claimed.

(b) Although the real case follows from the complex case, we give a separate, much easier proof for the case when $F = \RR$. 

Suppose there exists $S \in \RF(\RR)$ such that $S^2 = A$, and note that $S$ commutes with $A$.  Then $S W_\lambda \subseteq W_\lambda$ for all $\lambda \in \RR$, for if $x \in W_\lambda$, then $\lambda Sx = SAx = ASx$.

Specialize to $\lambda = -1$, and choose a nonzero $w \in W_{-1}$, so that $W_{-1} = \RR w$.  Since $S W_{-1} \subseteq W_{-1}$, we must have $Sw = \mu w$ for some $\mu \in \RR$.  But then $- w = A w = S^2 w = \mu^2 w$ and so $\mu^2 = -1$, which is impossible.

Therefore there is no $S \in \RF(\RR)$ such that $S^2 = A$.

Before addressing the complex case, we calculate some specific eigenvectors.  In the polynomial ring $\RR[z]$, define $f_j(z)$ for $j>0$ recursively by $f_1(z) := 1$ and
$$
f_j(z) := a_{j-1,j}^{-1} \left( z f_{j-1}(z) - \sum_{k=1}^{j-1} a_{j-1,k} f_k(z) \right) \qquad \forall\; j>1.
$$
Observe that each $f_j(z)$ has degree $j-1$ and leading coefficient $l_j$ where
$$
l_1 := 1 \qquad\text{and}\qquad l_j := (a_{12}a_{23} \cdots a_{j-1,j})^{-1} \quad \forall\ j>1.
$$
In view of \eqref{recur}, we see that
$$
w_\lambda := [f_1(\lambda), f_2(\lambda), f_3(\lambda), \dots]^t \ \ \text{lies in} \ \ W_\lambda \quad\forall\ \lambda \in \CC,
$$
so that $W_\lambda = \CC w_\lambda$.  

Now suppose $A$ has a square root in $\RF(\CC)$, say $S+iT$ where $S,T \in \RF(\RR)$.  Then:
\begin{itemize}
\item $S^2-T^2 = A$ and $ST+TS = 0$.
\item $S$ and $T$ commute with $A$, and $ST = -TS$.
\end{itemize}
Since $S$ and $T$ commute with $A$, they stabilize each $W_\lambda$, and so $Sw_\lambda = \sigma_\lambda w_\lambda$ and $Tw_\lambda = \tau_\lambda w_\lambda$ for some $\sigma_\lambda, \tau_\lambda \in \CC$.  Applying $S^2-T^2 = A$ and $ST+TS = 0$ to $w_\lambda$, we obtain $\sigma^2_\lambda - \tau^2_\lambda = \lambda$ and $\sigma_\lambda \tau_\lambda = 0$ for all $\lambda \in \CC$.  When $\lambda \in \RR$, we have $w_\lambda \in W_{\RR}$ and hence $\sigma_\lambda, \tau_\lambda \in \RR$.  Therefore
$$
\tau_\lambda = 0 \quad \forall\ \lambda \ge 0 \qquad\text{and}\qquad \sigma_\lambda = 0 \quad \forall\ \lambda \le 0.
$$

Assume that $S \ne 0$, say the $i$-th row of $S$ is nonzero.  Let $m$ be maximal such that $S_{im} \ne 0$.  Then
$$
0 = Sw_\lambda = \sum_{j=1}^m S_{ij}(w_\lambda)_j = \sum_{j=1}^m S_{ij}f_j(\lambda) \qquad \forall\; \lambda \le 0.
$$
Thus $\sum_{j=1}^m S_{ij}f_j(z)$ has infinitely many roots, so $\sum_{j=1}^m S_{ij}f_j(z) = 0$.  But 
$$
\sum_{j=1}^m S_{ij}f_j(z) = S_{im} l_m z^{m-1} + \text{\rm[lower terms]}$$
with $S_{im} l_m \ne 0$, which is impossible.  Therefore $S = 0$.

Similarly, $T = 0$.  But then $A = S^2-T^2 = 0$, another contradiction.

Therefore $A$ has no square root in $\RF(\CC)$.
\end{proof}

\begin{example}  \label{tridiag.posdef.no.sqrt}
There exist positive definite matrices in $\BR$ which have no square roots (positive definite or not, Hermitian or not) in $\BR$ or $\BC$.

For example, let
$$
A  := 
\renewcommand{\arraystretch}{1.4}
\begin{bmatrix} 1&1&0&0&\cdots\\ 1&2&1&0\\ 0&1&2&1\\ \vdots &&&\ddots \end{bmatrix} \in \BR
$$
as in Example \ref{tridiag.no.eigenval}, and recall from Example \ref{tridiag.posdef.no.eigenval} that $A$ is positive definite.  

Lemma \ref{almost.triang.no.sqrt} implies that $A$ has no square root in $\RF(\CC)$, nor in $\CF(\CC)$, by symmetry.
\end{example}

The positive definite matrix $A$ in Example \ref{tridiag.posdef.no.sqrt} is not invertible in $\BR$, as noted in Example \ref{tridiag.no.eigenval}.  But invertible examples exist as well.

\begin{example}  \label{unit.posdef.no.sqrt}
There exist positive definite matrices in $\BR$ which are invertible in $\BR$ but have no square roots (of any kind) in $\RF(\CC)$ or $\CF(\CC)$.

For example, take
$$
A := 
\renewcommand{\arraystretch}{1.5}
\begin{bmatrix} 1&1&1&0&0&0&0&0&\cdots \\
1&2&2&0&0&0&0&0 \\  1&2&3&1&1&0&0&0 \\
0&0&1&2&2&0&0&0 \\  0&0&1&2&3&1&1&0 \\
\vdots&&&&&\ddots  \end{bmatrix} \in \BR
$$
as in Example \ref{unit.posdef.no.eigenval}; then $A$ is positive definite and invertible in $\BR$.  As outlined there, recursive conditions for a vector $x \in W$ to be an eigenvector for $A$ with eigenvalue $\lambda$ can be given as follows:
\begin{equation}  \label{recur2}
\begin{aligned}
\begin{bmatrix} 1&1\\ 2-\lambda&2 \end{bmatrix} \begin{bmatrix} x_2\\ x_3 \end{bmatrix} &= \begin{bmatrix} (\lambda-1)x_1\\ - x_1 \end{bmatrix}  \\
\begin{bmatrix} 1&1\\ 2-\lambda&2 \end{bmatrix} \begin{bmatrix} x_{2k+2}\\ x_{2k+3} \end{bmatrix} &= \begin{bmatrix} - x_{2k-1} - 2 x_{2k} + (\lambda-3)x_{2k+1}\\ - x_{2k+1} \end{bmatrix} \qquad \forall\; k \in \Zpos \,.
\end{aligned}
\end{equation}
As long as $\lambda \ne 0$, an eigenvector $x$ with arbitrary nonzero $x_1$ can be obtained from this recursion.  Moreover, if $Ax = \lambda x$ and $x_1=0$, then $x = 0$.  Thus:

Every nonzero $\lambda \in \RR$ (or in $\CC$) is an eigenvalue for $A$ on $W$, and the corresponding eigenspace $W_\lambda$ is $1$-dimensional.  As in the real case of Lemma \ref{almost.triang.no.sqrt}, it follows that there is no $S \in \RF(\RR)$ such that $S^2 = A$.

The complex case requires additional calculations.  First, when $\lambda \ne 0$, the equations \ref{recur2} can be solved by choosing $x_1$ arbitrarily and using $\begin{bmatrix} 1&1\\ 2-\lambda&2 \end{bmatrix}^{-1} = \lambda^{-1} \begin{bmatrix} 2&-1\\ \lambda-2&1\end{bmatrix}$.  Define polynomials $f_j(z) \in \RR[z]$ recursively by setting
\begin{align*}
f_1(z) &:= 1, \qquad f_2(z) := 2z-1, \qquad f_3(z) := z^2-3z+1\\
f_{2k+2}(z) &:= (2z-5)f_{2k+1}(z) - 4f_{2k}(z) - 4f_{2k-1}(z) \quad(k>0)\\
f_{2k+3}(z) &:= (z^2-5z+5)f_{2k+1}(z) + (4-2z)f_{2k}(z) + (2-z)f_{2k-1}(z) \quad(k>0).
\end{align*}
Note that $\deg f_j(z) = j-1$ for all $j$.

With the above choices, $w_\lambda := [1, \lambda^{-1}f_2(\lambda), \lambda^{-1}f_3(\lambda), \dots]^t$ is a nonzero vector in $W_\lambda$.  Now suppose that $S+iT$, for some $S,T \in \RF(\RR)$, is a square root of $A$.  As in Lemma \ref{almost.triang.no.sqrt}, $Sw_\lambda = 0$ for all $\lambda < 0$ and $Tw_\lambda = 0$ for all $\lambda > 0$.

Suppose the $i$-th row of $S$ is nonzero and $m$ is maximal with $S_{im} \ne 0$.  Then 
$$
0 = Sw_\lambda = S_{i1} + \sum_{j=2}^m S_{ij} \lambda^{-1} f_j(\lambda) \qquad \forall\ \lambda < 0,
$$
and consequently $S_{i1} z + \sum_{j=2}^m S_{ij} f_j(z) = 0$.  If $m \ge 3$, we find that $S_{im}\cdot \lc(f_m) = 0$, which is impossible.  In case $m = 1$ or $2$, we have $S_{i1} z = 0$ or $S_{i1} z + S_{i2}(2z-1) = 0$, which are also impossible.  Thus $S = 0$.

Similarly, $T=0$, whence $A = S^2-T^2 = 0$, another contradiction.  Therefore $A$ has no square root in $\RF(\CC)$.  By symmetry, $A$ has no square root in $\CF(\CC)$ either.
\end{example}

\begin{remark}
For finite positive definite matrices $A$, the standard proof of the existence of square roots is through $A$ being orthogonally or unitarily diagonalizable and having positive eigenvalues. A natural question is whether there is a proof through the Cholesky factorization (whose own proof need not involve eigenvalues)? We are not aware of any such proof. In fact, to the contrary, Example \ref{unit.posdef.no.sqrt} suggests such a proof does not exist. For the Cholesky factorization also holds in $B(F)$ (by Theorem \ref{Ch.fac.inBR}) but square roots are lacking (Example \ref{unit.posdef.no.sqrt}), whereas any proof of square roots via the Cholesky factorization in the finite case would presumably carry over to $B(F)$. 
\end{remark}

                                %
                                %
                                
 \section{Automorphisms and involutions of the algebra B(F)}
 
 Here we return to an arbitrary base field $F$, with specializations to $F = \RR$ or $\CC$ where noted, and where $*$ denotes transpose or conjugate transpose in those cases.  
 
 By an \emph{involution} of an algebra we mean only a ring anti-automorphism $\smallsharp$ of order $1$ or $2$, that is, an additive map satisfying the identities $(xy)^{\#} = y^{\#} x^{\#}$ and $(x^{\#})^{\#} = x$.  Such a map is an $F$-algebra involution if and only if it is $F$-linear.  On complex algebras, one considers \emph{conjugate linear} involutions $\smallsharp$, meaning that $(\alpha x)^{\#} = \overline{\alpha} x^{\#}$ for $\alpha \in \CC$ and $x$ in the algebra, where $\overline{\alpha}$ is the complex conjugate of $\alpha$.  In particular, the conjugate transpose involution $*$ on $B(\CC)$ or $M_n(\CC)$ is conjugate linear.

The richness of an algebraic structure is often reflected in the nature of its automorphisms, such as all being inner automorphisms. E.g.\ full symmetric groups, full linear rings, finite dimensional central simple algebras (Skolem-Noether theorem). Luckily for us, Nathan Jacobson in the 1940s established that many other algebras of transformations, including $B(F)$, join this list.  The importance to us of having all the automorphisms of $B(F)$ inner is that it allows us to use a lovely old trick of Jacobson \cite[\S IX.12]{Jac53} to characterize all the general algebra involutions of $B(F)$ in terms of the units of $B(F)$ and the transpose involution. (Concerning units of $B(F)$, see Theorem \ref{units.BF} and Corollary \ref{units.BR}.) For the record:

\begin{theorem}\label{T:B} {\rm[Jacobson]} All $F$-algebra automorphisms of $B(F)$ are inner.
\end{theorem}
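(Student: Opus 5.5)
The plan is to realize $B(F)$ as an algebra of linear transformations of $V$ and identify its minimal one-sided ideals. Then I will apply the Jacobson-style argument that an automorphism preserving the socle is induced by a semilinear bijection of $V$. Since we deal with $F$-algebra automorphisms, that bijection is linear, and I will then show it lies in $B(F)$ and is a unit there.

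\textbf{Step 1: the socle.} Identify $B(F)$ with the algebra of those $T\in\End_F(V)$ whose transpose-adjoint also maps $V^t$ into $V^t$; equivalently, the linear maps of $V$ continuous for the pairing $V^t\times V\to F$. The ideal $\mathcal F$ of finite-rank matrices consists of the matrices with only finitely many nonzero entries, hence lies in $B(F)$. I will show that $\mathcal F$ is the socle of $B(F)$. The idempotents $E_{11}$ and its conjugates generate minimal right ideals $eB(F)$, since $E_{11}B(F)$ is the set of matrices supported on row $1$ with arbitrary finite row. Every minimal right ideal contains a rank-one idempotent $vf$ with $v\in V$, $f\in V^t$, $fv=1$. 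Any $F$-algebra automorphism $\varphi$ permutes minimal right ideals, so $\varphi(\mathcal F)=\mathcal F$, and $\varphi$ sends rank-one idempotents to rank-one idempotents.

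\textbf{Step 2: construct the intertwiner.} Fix $e=E_{11}$ and $e'=\varphi(e)=v_0f_0$. The left ideal $B(F)e$ consists of the matrices supported on column $1$ with entries in $V$, so $B(F)e\cong V$ as a left $B(F)$-module via $Xe\mapsto Xe_1$. Similarly $B(F)e'\cong V$ via $Ye'\mapsto Yv_0$. Define $P:V\to V$ by
\[
P(Xe_1):=\varphi(X)v_0 .
\]
This is well defined: $Xe=X'e$ forces $\varphi(X)e'=\varphi(X')e'$, and hence $\varphi(X)v_0=\varphi(X')v_0$. It is $F$-linear and bijective, because $\varphi^{-1}$ gives the inverse construction. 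By construction $PXv=\varphi(X)Pv$ for all $v\in V$, so $\varphi(X)=PXP^{-1}$ as operators on $V$, with $P$ invertible in $\CF$. The same construction applied to left annihilators on $V^t$, using right ideals $eB(F)\cong V^t$, gives an invertible $Q\in\RF$ with $\varphi(X)=Q^{-1}XQ$ on $V^t$. I will normalize $Q$ against $P$ using $e'$.

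\textbf{Step 3 (main obstacle): $P\in B(F)$.} The key relation is that for $f\in V^t$ and $v\in V$, the element $vf\in\mathcal F$ satisfies $\varphi(vf)=(Pv)(fP^{-1})$. Because $\varphi(vf)$ is a finite matrix, $fP^{-1}$ must lie in $V^t$. Hence $P^{-1}$ maps $V^t$ into $V^t$ on the right, i.e.\ $P^{-1}$ is row-finite. Applying the same argument to $\varphi^{-1}$ shows that $P$ is row-finite as well. So $P,P^{-1}\in\CF\cap\RF=B(F)$, and $\varphi$ is conjugation by the unit $P$.

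I expect the delicate point to be the claim that $fP^{-1}$ (a priori an element of $W^t$) equals the row vector obtained from the finite-rank matrix $\varphi(vf)$. This uses \RMC-associativity as in Proposition \ref{RMC-assoc}, applied to $P(vf)P^{-1}$ with $P\in\CF$ and $P^{-1}\in\CF$. If that is insufficient, I will first evaluate $\varphi(vf)Pw=PvfwP$, deduce that the functional $fP^{-1}$ coincides with a finite row on $V$, and conclude via the identification $W^t=\Hom_F(V,F)$.
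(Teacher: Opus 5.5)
Your argument is correct and is essentially the paper's route made explicit: the paper identifies $B(F)$ with $\mathcal{L}(V^t\mid V)$ and quotes Jacobson's isomorphism theorem, whose proof is your sequence of socle $=$ finite matrices, the intertwiner $P$ from $B(F)e\cong V$, and row-finiteness of $P^{\pm1}$ as the ``continuity'' forced by $\varphi^{\pm1}(\mathcal{F})=\mathcal{F}$. Minor points: $Q$ is never needed; for $P$ itself it is cleanest to use $\varphi^{-1}(Y)=P^{-1}YP$ directly; and $\varphi(vf)=(Pv)(fP^{-1})$ follows from associativity in the algebra $\CF$ plus a finite entrywise computation, not from \RMC-associativity, which would need $P$ row-finite, the very thing being proved.
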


\begin{proof} 
This follows from \cite[\S IX.11, Corollary 1]{Jac53} since $B(F) \cong \mathcal{L}(V^t \mid V)$ where $V^t$ is identified with the span in $\Hom_F(V,F)$ of a dual basis corresponding to the standard countable basis of $V$ (cf.\ ~\cite[Exercise 2, p.\ 263]{Jac53}).  It also follows from Jacobson's original result \cite[Theorem 32]{Jac45}, which implies that ring automorphisms of $B(F)$ are semilinearly inner.
\end{proof}

\begin{theorem}\label{T:AlgInvol} 
The $F$-algebra involutions on $R = B(F)$ or $M_n(F)$ are exactly the maps $\smallsharp$ of the form
 \[
     A \ \longmapsto \ A^{\#} \  = \  C^{-1}(A^t)C \ \ \forall\ A \in R
\]
for some $C \in R$ which is  a symmetric or skew symmetric unit of $R$ relative to transpose ($C^t = C$ or $C^t = - C$). In particular, $\smallsharp$ is the transpose operation followed by a conjugation.
\end{theorem}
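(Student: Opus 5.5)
Given an $F$-algebra involution $\smallsharp$ on $R$, we compose it with the transpose. The transpose $t$ is an $F$-linear anti-automorphism of $R$: it preserves $B(F)$ and $M_n(F)$ and reverses products. Hence the map $\varphi(A) := (A^{\#})^t$ is an $F$-algebra automorphism of $R$. By Theorem \ref{T:B} (or Skolem--Noether in the finite case), $\varphi$ is inner. So there is a unit $D \in R$ with $(A^{\#})^t = D A D^{-1}$ for all $A$. Transposing gives $A^{\#} = (D^{-1})^t A^t D^t$. Setting $C := D^t$, which is a unit of $R$ with $(C^{-1}) = (D^{-1})^t$, we get $A^{\#} = C^{-1}A^tC$.

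Next we use $(A^{\#})^{\#} = A$ to constrain $C$. We compute
\[
(A^{\#})^{\#} = C^{-1}\bigl(C^{-1}A^tC\bigr)^tC = C^{-1}C^t A (C^{-1})^t C = (C^{-1}C^t)\,A\,(C^{-1}C^t)^{-1}.
\]
Here $(C^{-1})^t = (C^t)^{-1}$, since transpose is an anti-automorphism of $R$, and all products lie in the associative algebra $R$, so there is no associativity issue. Thus $u := C^{-1}C^t$ is central in $R$. The center of $B(F)$, like that of $M_n(F)$, consists of scalar matrices, because an element commuting with all $E_{ij}$ is scalar. Hence $C^t = \lambda C$ for some $\lambda \in F$. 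Transposing gives $C = \lambda C^t = \lambda^2 C$, so $\lambda^2 = 1$ since $C$ is a unit. Therefore $\lambda = \pm 1$ and $C$ is symmetric or skew symmetric. (In characteristic $2$ the two cases coincide.)

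Conversely, let $C$ be a symmetric or skew symmetric unit and set $A^{\#} := C^{-1}A^tC$. This map is $F$-linear and additive. It reverses products, since $(AB)^{\#} = C^{-1}B^tA^tC = C^{-1}B^tC\,C^{-1}A^tC = B^{\#}A^{\#}$. The computation above with $C^t = \pm C$ gives $u = \pm I$, which is central, so $(A^{\#})^{\#} = A$. Hence $\smallsharp$ is an involution, and the final sentence of the statement is immediate from the formula.

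The only substantive input is that all automorphisms of $B(F)$ are inner (Theorem \ref{T:B}); the rest is routine. The step needing care is checking that the transpose and inverse manipulations stay inside $R$, where associativity holds. This is automatic because $R$ is an associative algebra closed under transpose and we never leave it.
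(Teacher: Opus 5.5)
Your proof is correct and follows the same route as the paper's (Jacobson's trick): compose $\smallsharp$ with transpose to get an automorphism, which is inner by Theorem \ref{T:B} (or Skolem--Noether), then use $(A^{\#})^{\#}=A$ to force a central unit, hence a scalar $\lambda$ with $\lambda^2=1$. The differences are cosmetic: you set $C:=D^t$ at the outset and run the centrality argument on $C^{-1}C^t$, whereas the paper runs it on $D^tD^{-1}$; you also spell out why the center is scalar and check the converse direction, both of which the paper leaves to the reader.
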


\begin{proof}  (Following the argument of Jacobson in \cite[\S IX.12]{Jac53})

If $C$ is a symmetric or skew symmetric unit of $R$, it is easily checked that the rule $A^{\#} = C^{-1}(A^t)C$ defines an $F$-algebra involution on $R$.

Conversely, assume that we are given an $F$-algebra involution $\smallsharp$ on $R$.
Composing two involutions gives an automorphism, which from Theorem \ref{T:B} we know will be inner.  The automorphism  $A \mapsto (A^{\#})^t$ is inner, so for some unit $D \in R$,
\[
       D^{-1}AD \ =(A^{\#})^t  \ \ \text{and} \ \  A^{\#} = (D^t)(A^t)(D^{-1})^t, \ \ \forall\ A \in R.
 \]
Inasmuch as $(A^{\#})^{\#} = A$, a simple computation shows $(D^t)D^{-1}$ centralizes all of $R$, whence must be a scalar matrix $\alpha I$. Therefore, $D^t = \alpha D$, whence $D = (\alpha D)^t = \alpha D^t = \alpha^2 D$ and consequently $\alpha = 1, -1.$ Finally, taking $C = (D^{-1})^t = (D^t)^{-1}$ does the trick.
\end{proof}

\begin{theorem}  \label{T:ConjLinInvol}
The conjugate linear involutions on $R = B(\CC)$ or $M_n(\CC)$ are exactly the maps $\smallsharp$ of the form
 \[
A \ \longmapsto \     A^{\#} \  = \  C^{-1}(A^*)C \ \ \forall\ A \in R
\]
for some Hermitian unit $C \in R$. In particular, $\smallsharp$ equals $*$ followed by a conjugation.
\end{theorem}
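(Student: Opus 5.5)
The plan is to adapt the proof of Theorem \ref{T:AlgInvol}, composing the given conjugate linear involution with $*$ to get a $\CC$-linear automorphism. Theorem \ref{T:B} then makes that automorphism inner, and the remaining work is to pin down the scalar that appears.

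First, the easy direction. Suppose $C$ is a Hermitian unit of $R$. Then $A \mapsto C^{-1}A^*C$ is additive and conjugate linear, since $*$ is. It is an anti-homomorphism because $C^{-1}(AB)^*C = (C^{-1}B^*C)(C^{-1}A^*C)$. Applying the map twice gives
$$C^{-1}(C^{-1}A^*C)^*C = C^{-1}C^*A(C^{-1})^*C = C^{-1}CAC^{-1}C = A,$$
using $C^* = C$ and $(C^{-1})^* = (C^*)^{-1} = C^{-1}$. All products here are taken in the associative algebra $R$, so there are no associativity issues.

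Conversely, let $\smallsharp$ be a conjugate linear involution on $R$.

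\textbf{Step 1.} The map $\varphi(A) := (A^{\#})^*$ is the composite of two conjugate linear anti-automorphisms. Hence it is a $\CC$-linear ring automorphism, that is, a $\CC$-algebra automorphism of $R$.

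\textbf{Step 2.} By Theorem \ref{T:B} (for $B(\CC)$), or by Skolem–Noether (for $M_n(\CC)$), there is a unit $D \in R$ with $(A^{\#})^* = D^{-1}AD$ for all $A$. Taking $*$ gives $A^{\#} = D^*A^*(D^*)^{-1}$.

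\textbf{Step 3.} Apply $\smallsharp$ twice. Using $(A^{\#})^{\#} = A$, we get
$$A = D^*\bigl(D^*A^*(D^*)^{-1}\bigr)^*(D^*)^{-1} = D^*D^{-1}AD(D^*)^{-1}.$$
So $D^*D^{-1}$ is central in $R$. The centre of $B(\CC)$ consists of the scalar matrices (the finite matrix units already force this), so $D^* = \alpha D$ for some $\alpha \in \CC$.

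\textbf{Step 4.} This is the one point that differs from Theorem \ref{T:AlgInvol}. Applying $*$ to $D^* = \alpha D$ gives $D = \overline{\alpha}D^* = \overline{\alpha}\alpha D = |\alpha|^2 D$. Hence $|\alpha| = 1$, which by itself does not give $\alpha = \pm 1$. The fix is to rescale. Write $\alpha = e^{i\theta}$ and set $D' := e^{i\theta/2}D$. Then
$$D'^* = e^{-i\theta/2}D^* = e^{-i\theta/2}e^{i\theta}D = e^{i\theta/2}D = D',$$
so $D'$ is Hermitian. Replacing $D$ by $D'$ leaves the inner automorphism in Step 2 unchanged, since scalars cancel in $D^{-1}AD$.

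\textbf{Step 5.} Put $C := (D^*)^{-1} = D^{-1}$. This is a Hermitian unit, and $A^{\#} = C^{-1}A^*C$.

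I expect Step 4 to be the main obstacle. In the linear case the scalar is forced to be $\pm 1$, which is what allowed skew-symmetric $C$. Here only $|\alpha| = 1$ comes out, and the skew case is absorbed by multiplying $D$ by a square root of $\alpha$; for example, a skew-Hermitian $C$ becomes Hermitian after multiplying by $i$. The only other point needing care is that all manipulations stay inside $R$, which is an associative algebra, so the associativity pitfalls of Section \ref{sec:assoc} do not arise.
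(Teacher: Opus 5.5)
Your proposal is correct and follows the paper's proof. You compose $\smallsharp$ with $*$ to get a $\CC$-algebra automorphism, make it inner via Theorem \ref{T:B}, deduce $D^* = \alpha D$ with $|\alpha| = 1$, and rescale by a scalar to obtain a Hermitian conjugating unit. The only cosmetic difference is that the paper rescales $C$ by $\gamma^{-1}$ for the explicit choice $\gamma = b + i(a-1)$ satisfying $\overline{\gamma} = \alpha\gamma$, whereas you rescale $D$ by $e^{i\theta/2}$, which does the same job.
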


\begin{proof}
If $C \in R$ is a Hermitian unit, it is easily checked that $C^{-1}(-)^*C$ is a conjugate linear involution on $R$.

Conversely, assume that we are given a conjugate linear involution $\smallsharp$ on $R$.
First apply the argument of Theorem \ref{T:AlgInvol} with transpose replaced by $*$, noting that if $\smallsharp$ is a conjugate linear involution on $R$, then $A \mapsto (A^{\#})^*$ is a $\CC$-algebra automorphism of $R$.  This produces a unit $C \in R$, with $C^* = \alpha C$ for some $\alpha \in \CC^\times$ with $|\alpha| = 1$, such that $A^{\#} = C^{-1}(A^*)C$ for $A \in R$.

We claim that there is a nonzero scalar $\gamma \in \CC$ such that $\overline{\gamma} = \alpha\gamma$.  If $\alpha = 1$, we can take $\gamma := 1$, so assume $\alpha \ne 1$.  Then $\alpha = a+ib$ for $a,b \in \RR$ with $a^2+b^2 = 1$ and $a \ne 1$.  In this case, $\gamma := b + i(a-1)$ satisfies $\gamma \ne 0$ and $\overline{\gamma} = \alpha\gamma$.

Now $\gamma^{-1}C$ is a unit of $R$ such that $(\gamma^{-1}C)^* = \overline{\gamma}^{-1} C^* = (\alpha\gamma)^{-1} \alpha C = \gamma^{-1}C$.  Since $(\gamma^{-1}C)^{-1}A^*(\gamma^{-1}C) = C^{-1}A^*C = A^{\#}$ for $A \in R$, we are done on replacing $C$ with $\gamma^{-1}C$.
\end{proof} 

Recall: an involution $\smallsharp$ on a ring $R$ is \emph{positive definite}, or \emph{strictly proper}, if $\sum_{i=1}^n a_i^{\#}a_i = 0$ implies all $a_i = 0$, for any $a_i \in R$; it is \emph{proper} if $a^{\#}a = 0$ implies $a = 0$.  The identity involution on any field is proper, but on $\mathbb{C}$ it is not positive definite.  Of course transpose is positive definite on $M_n(\RR)$ and on $\BR$, while conjugate transpose is positive definite on $M_n(\CC)$ and on $\BC$.

But are the other involutions of $B(F)$ obtained as in Theorem \ref{T:AlgInvol} (resp., Theorem \ref{T:ConjLinInvol}) really that much different than transposing (resp., than $*$)? Parallels with the algebra $M_n(F)$ suggest they can be very different if we are willing to forsake positive definiteness.

\begin{example} Consider the real algebra $R = M_n(\mathbb{R})$ with $n \ge 2$ and define $\smallsharp$ by transposing a matrix across its NE--SW diagonal instead of the usual NW--SE diagonal; e.g.,
\[
\renewcommand{\arraystretch}{1.2}
      \left[\begin{array}{cccc}
      a & b & c & d\\
      e & f & g & h\\
      i & j & k & l\\
      m & n & o & p
      \end{array}\right]^{\#} \ \ \ \ = \ \ \ \
      \left[\begin{array}{cccc}
      p & l & h & d\\
      o & k & g & c\\
      n & j & f & b\\
      m & i & e & a
      \end{array}\right].
 \]
 This fits the bill for an involution because it takes the form $A^{\#} = C^{-1}A^tC$ for (in the $4\times4$ case)
 \[
\renewcommand{\arraystretch}{1.2}
 C \ \ := \ \ \left[\begin{array}{cccc}
      0 & 0 & 0 & 1\\
      0 & 0 & 1 & 0\\
      0 & 1 & 0 & 0\\
      1 & 0 & 0 & 0
      \end{array}\right].
 \]
 With the standard involution all symmetric elements are well-behaved (e.g.\ diagonalizable) and therefore there are no nonzero symmetric nilpotent members of $R$. With this new involution, there are plenty of nonzero self-adjoint nilpotents! E.g., $E_{1n}$.  Also the new involution is no longer positive definite because $AA^{\#} = 0$ for $A = E_{11}$.
 \hfill   $\square$
\end{example}

\begin{definition}  \label{D:EquivInvol}
We call two involutions $\smallsharp$ and $\smallsharpp$ of an $F$-algebra $R$  \emph{equivalent} if there is an $F$-algebra automorphism $\theta$ of $R$ such that
 \[
     \theta(a^{\#}) \ = \ (\theta(a))^{\#'} \ \ \forall\ a \in R.
 \]
 (This is applicable if $\smallsharp$ and $\smallsharpp$ are both $F$-algebra involutions, or if $F = \CC$ and $\smallsharp$, $\smallsharpp$ are both conjugate linear.) 

Then $a$ and $\theta(a)$ have identical properties relative to the algebra and involution operations $\smallsharp$, $\smallsharpp$,  such as being self-adjoint or projections.  In particular, if $\smallsharpp$ is proper then so is $\smallsharp$.  Namely, if $a \in R$ with $a^{\#}a = 0$, then $0 = \theta(a^{\#}a) = (\theta(a))^{\#'} \theta(a)$, so $\theta(a) = 0$ and $a = 0$.  Similarly, if $\smallsharpp$ is positive definite then so is $\smallsharp$.
\end{definition}

A case of particular interest to us is when the two involutions $\smallsharp$, $\smallsharpp$ are defined as in Theorems \ref{T:AlgInvol}, \ref{T:ConjLinInvol}.

\begin{proposition}  \label{P:EquivInvol}
  Let $R$ be $B(F)$ or $M_n(F)$. Let $C,D \in R$ be symmetric or skew symmetric units relative to transpose, and let $\smallsharp$, $\smallsharpp$ be the $F$-algebra involutions defined as in Theorem \ref{T:AlgInvol} using $C$, $D$ respectively.

Then $\smallsharp$ and $\smallsharpp$ are equivalent if and only if there are a unit $U$ of $R$ and nonzero scalar $\alpha \in F$ such that  $C \ = \ \alpha ((U^t)^{-1}DU^{-1})$.
\end{proposition}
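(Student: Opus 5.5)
We prove both directions directly, using Theorem \ref{T:B} to reduce every automorphism to conjugation by a unit. Throughout, $A^{\#} = C^{-1}A^tC$ and $A^{\#'} = D^{-1}A^tD$.

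\textbf{($\Longleftarrow$).} Suppose $C = \alpha (U^t)^{-1}DU^{-1}$ with $U$ a unit of $R$ and $\alpha \ne 0$. Define $\theta(A) := UAU^{-1}$, an $F$-algebra automorphism of $R$. We compute
$$\theta(A^{\#}) = UC^{-1}A^tCU^{-1}.$$
Using $C^{-1} = \alpha^{-1} U D^{-1} U^t$, this becomes
$$\theta(A^{\#}) = U^2 D^{-1} U^t A^t (U^t)^{-1} D U^{-2},$$
the scalars $\alpha$ and $\alpha^{-1}$ cancelling. On the other hand,
$$(\theta(A))^{\#'} = D^{-1}(UAU^{-1})^tD = D^{-1}(U^{-1})^tA^tU^tD.$$
These two expressions do not obviously agree, so the choice of $\theta$ must be adjusted. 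Instead take $\theta(A) := U^{-1}AU$. Then
$$\theta(A^{\#}) = U^{-1}C^{-1}A^tCU = D^{-1}U^tA^t(U^t)^{-1}D,$$
again with $\alpha$ cancelling. Also
$$(\theta(A))^{\#'} = D^{-1}(U^{-1}AU)^tD = D^{-1}U^tA^t(U^{-1})^tD.$$
Since $(U^{-1})^t = (U^t)^{-1}$, the two sides agree, so $\theta$ witnesses the equivalence. We will double-check the orientation at the write-up stage; the point is that one of $U^{\pm1}$ works.

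\textbf{($\Longrightarrow$).} Suppose $\theta$ is an automorphism with $\theta(A^{\#}) = \theta(A)^{\#'}$. By Theorem \ref{T:B} (or Skolem--Noether for $M_n(F)$), $\theta(A) = U^{-1}AU$ for some unit $U$. The hypothesis then reads, for all $A \in R$,
$$U^{-1}C^{-1}A^tCU = D^{-1}U^tA^t(U^t)^{-1}D.$$
Replacing $A^t$ by an arbitrary $B$ (transpose is a bijection of $R$), we get that $X := (U^t)^{-1}DU^{-1}C^{-1}$ satisfies $XB = BX$ for every $B$, with the obvious regrouping and using \RMC-associativity inside $B(F)$, where all factors are row- and column-finite. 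Hence $X$ is central, so $X = \beta I$ for some nonzero $\beta \in F$. The center of $B(F)$ consists of scalars because commuting with all $E_{ij}$ forces scalar matrices, as already used in Theorem \ref{T:AlgInvol}. Therefore $(U^t)^{-1}DU^{-1} = \beta C$, which gives $C = \beta^{-1}(U^t)^{-1}DU^{-1}$, and we take $\alpha := \beta^{-1}$.

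The only real care needed is in matching the conjugation convention for $\theta$ (using $U$ versus $U^{-1}$) so that the formula appears in exactly the stated form; the rest is routine bookkeeping. No associativity issues arise, since all matrices involved lie in the algebra $R$.
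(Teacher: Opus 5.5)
Your proof is correct and follows the paper's argument: write the automorphism as $\theta(A)=U^{-1}AU$ (all automorphisms being inner), turn $\theta(A^{\#})=\theta(A)^{\#'}$ into a centralizer condition, and use that the center of $R$ is $F$. Your central element $(U^t)^{-1}DU^{-1}C^{-1}$ is just the inverse of the paper's $CUD^{-1}U^t$, and the paper handles both directions at once as a chain of equivalences. In the write-up, drop the false start with $UAU^{-1}$ and the remark about checking the orientation later, since your second computation already verifies that $U^{-1}AU$ works.
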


\begin{proof}
 Since $F$-algebra automorphisms are inner here,
  \begin{align*}
\smallsharp\ &\text{and}\ \smallsharpp\ \text{are equivalent} \\
        &\Longleftrightarrow  D^{-1}(U^{-1}AU)^tD \ = \ U^{-1}(C^{-1}A^tC)U  \ \ \mbox{for some unit $U \in R$ and all $A \in R$} \notag \\
        &\Longleftrightarrow  D^{-1}U^tA^t(U^t)^{-1}D \ = \ U^{-1}C^{-1}A^tCU  \ \ \mbox{for all $A \in R$} \notag \\
        &\Longleftrightarrow  CUD^{-1}U^t \ \ \mbox{centralizes $A^t$ for all $A \in R$ }  \notag \\
        &\Longleftrightarrow  CUD^{-1}U^t \ \ \mbox{is a nonzero scalar $\alpha$ times $I$}  \notag \\
        &\Longleftrightarrow  C \ = \ \alpha ((U^t)^{-1}DU^{-1}) \ \ \mbox{for some scalar $\alpha \in F^*$,}
  \end{align*}
as claimed.
\end{proof}

\begin{proposition}  \label{P:EquivConjLinInvol}
Let $R$ be $\BC$ or $M_n(\CC)$, and let $C,D \in R$ be Hermitian units.  Let $\smallsharp$, $\smallsharpp$ be the conjugate linear involutions on $R$ defined as in Theorem \ref{T:ConjLinInvol} using $C$, $D$ respectively.

Then $\smallsharp$ and $\smallsharpp$   are equivalent if and only if there are a unit $U$ of $R$ and nonzero scalar $\alpha \in \CC$ such that  $C \ = \ \alpha ((U^*)^{-1}DU^{-1})$.
\end{proposition}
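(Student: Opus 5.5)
The argument will mirror the proof of Proposition \ref{P:EquivInvol}, with $*$ in place of transpose. The one new point is that an equivalence of conjugate linear involutions is a $\CC$-algebra automorphism $\theta$. By Theorem \ref{T:B} (or Skolem--Noether for $M_n(\CC)$), $\theta$ is inner, so $\theta(A) = U^{-1}AU$ for some unit $U \in R$. Conversely, every unit $U$ gives such an automorphism. So $\smallsharp$ and $\smallsharpp$ are equivalent if and only if, for some unit $U$,
\[
U^{-1}(C^{-1}A^*C)U = D^{-1}(U^{-1}AU)^*D \qquad \forall\ A \in R .
\]

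Next, expand the right side. Using $(XY)^* = Y^*X^*$ and $(U^{-1})^* = (U^*)^{-1}$ inside $R$ (all factors are in $R$, so associativity is not an issue), it equals $D^{-1}U^*A^*(U^*)^{-1}D$. Multiply on the left by $CU$ and on the right by $U^{-1}D^{-1}U^*$. The condition then becomes
\[
A^*\,(CUD^{-1}U^*) = (CUD^{-1}U^*)\,A^* \qquad \forall\ A \in R .
\]
As $A$ ranges over $R$, so does $A^*$. Hence the condition says that the unit $Z := CUD^{-1}U^*$ is central in $R$.

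The center of $B(\CC)$ (and of $M_n(\CC)$) consists of the scalar matrices. Indeed, a matrix commuting with all the matrix units $E_{ij}$, which lie in $B(\CC)$, must be scalar. So $Z = \alpha I$ with $\alpha \neq 0$, since $Z$ is a unit. Solving, $CUD^{-1}U^* = \alpha I$ gives $C = \alpha (U^*)^{-1}DU^{-1}$. Each step reverses: given $U$ and $\alpha$ with $C = \alpha (U^*)^{-1}DU^{-1}$, we get $Z = \alpha I$, which is central, and so $\theta(A) = U^{-1}AU$ intertwines the two involutions.

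The only point needing care is the reduction to inner automorphisms. The definition of equivalence requires $\theta$ to be an $F$-algebra automorphism with $F = \CC$, so $\theta$ is $\CC$-linear and Theorem \ref{T:B} applies directly. No semilinear automorphisms need to be considered.

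One might expect to be able to force $\alpha$ real, since $C$ and $D$ are Hermitian. Taking $*$ of $C = \alpha (U^*)^{-1}DU^{-1}$ gives $\overline{\alpha} = \alpha$. Nevertheless, the statement only asks for $\alpha \in \CC$ nonzero, so this refinement is not needed.
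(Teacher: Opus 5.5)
Your proposal is correct and is essentially the paper's proof, which is simply the argument of Proposition~\ref{P:EquivInvol} with $*$ in place of transpose: write $\theta$ as an inner automorphism, expand, observe that $CUD^{-1}U^*$ must be central and hence a nonzero scalar, and solve for $C$. One small slip: to pass from $U^{-1}C^{-1}A^*CU = D^{-1}U^*A^*(U^*)^{-1}D$ to $A^*Z = ZA^*$ you should multiply on the right by $D^{-1}U^*$, not by $U^{-1}D^{-1}U^*$, although the equation you then state is the correct one.
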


\begin{proof}
As Proposition \ref{P:EquivInvol}.
\end{proof}

 \begin{theorem}\label{T:EquivInvol}
  Let $R$ be $\BR$ or $M_n(\RR)$. Let $C \in R$ be a symmetric or skew symmetric unit relative to transpose, and let $\smallsharp$ be the $\RR$-algebra involution on $R$ defined as in Theorem \ref{T:AlgInvol} using $C$.
\begin{enumerate}
 \item  The involution $\smallsharp$ is equivalent to the transpose involution exactly when $C$ is a positive definite real matrix or its negative.
 \item If $C$ is a symmetric unit of $R$ but neither $C$ nor $-C$ is positive definite, then the involution $\smallsharp$ is not a proper involution. 
 \item To within equivalence, transposition is the only proper $\RR$-algebra involution on $R$.
 \item Every proper $\RR$-algebra involution on $R$ is positive definite.
  \end{enumerate}
  \end{theorem}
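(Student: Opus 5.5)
The plan is to reduce everything to Proposition \ref{P:EquivInvol} with $D = I$ (so that $\smallsharpp$ is transposition), together with the Cholesky results of Section \ref{sec:cholesky}. We prove (1) and (2) directly, and then deduce (3) and (4) from them.

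\textbf{Part (1).} By Proposition \ref{P:EquivInvol}, $\smallsharp$ is equivalent to transposition if and only if $C = \alpha (U^t)^{-1}U^{-1}$ for some unit $U$ and some $\alpha \in \RR^\times$. Put $V := U^{-1}$; this condition reads $C = \alpha V^tV$.

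For the forward direction, $V$ is a unit of $R$, so $\rann_V(V) = 0$, and Remark \ref{ctc.posdef} shows that $V^tV$ is positive definite. Since $\alpha$ is a nonzero real, $C$ or $-C$ is a positive multiple of a positive definite matrix, hence positive definite. (In particular $C$ is symmetric, so a nonzero skew symmetric $C$ never gives an involution equivalent to transposition.)

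For the converse, suppose $\epsilon C$ is positive definite for some $\epsilon = \pm1$. By Theorem \ref{Ch.fac.inBR}(a), $\epsilon C = W^tW$ for an upper triangular $W \in B(F)$. Since $\epsilon C$ is a unit of $B(F)$, Theorem \ref{Ch.fac.inBR}(c) shows that $W$ is a unit of $B(F)$. For $R = M_n(\RR)$ the same facts are classical. Taking $U := W^{-1}$ and $\alpha := \epsilon$ then gives equivalence. I expect this step to be the main subtlety: we need the Cholesky factor to be a unit of $B(F)$ itself, not merely of $\CF$, and Theorem \ref{Ch.fac.inBR}(c) supplies exactly this.

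\textbf{Part (2).} Note that $A^{\#}A = C^{-1}A^tCA$ vanishes iff $A^tCA = 0$, since $C$ is a unit. So it suffices to find a nonzero $x \in V$ (or $F^n$) with $x^tCx = 0$. Given such $x$, take $A := xe_1^t$, the matrix whose first column is $x$ and whose other entries are $0$. Then $A \in R$ because $x$ has finite support, and $A^tCA = e_1(x^tCx)e_1^t = 0$ with $A \ne 0$, so $\smallsharp$ is not proper.

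To produce $x$: because neither $C$ nor $-C$ is positive definite, there are nonzero $y,z \in V$ with $y^tCy \le 0 \le z^tCz$. If either inequality is an equality we are done. Otherwise consider $t \mapsto q(t) := (ty+(1-t)z)^tC(ty+(1-t)z)$ on $[0,1]$, which is continuous and changes sign, so it has a root $t_0$. The vector $x := t_0y+(1-t_0)z$ is nonzero: if it were $0$, then $y$ would be a positive multiple of $z$, forcing $y^tCy$ and $z^tCz$ to have the same sign.

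\textbf{Parts (3) and (4).} Let $\smallsharp$ be a proper $\RR$-algebra involution. By Theorem \ref{T:AlgInvol} it comes from a symmetric or skew symmetric unit $C$. If $C$ is skew symmetric, then $x^tCx = 0$ for every $x$, and the matrix $A = xe_1^t$ from (2) (with any nonzero $x$) shows that $\smallsharp$ is not proper; hence $C$ is symmetric. By (2), $C$ or $-C$ is positive definite, and by (1), $\smallsharp$ is equivalent to transposition, which gives (3). Finally, transposition is positive definite on $R$, and Definition \ref{D:EquivInvol} notes that positive definiteness transfers across equivalence, which gives (4).
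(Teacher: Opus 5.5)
Your proof is correct. Parts (1), (3) and (4) follow the paper's argument essentially verbatim: you use Proposition \ref{P:EquivInvol} with $D=I$, together with Theorem \ref{Ch.fac.inBR}(c) to get a Cholesky factor that is a unit of $\BR$ itself. Your skew symmetric case, taken with $x=e_1$, is exactly the paper's observation that $E_{11}CE_{11}=0$.

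The genuine difference is in (2).

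\textbf{The paper's route.} It first handles $M_n(\RR)$ spectrally. An invertible symmetric $C$ with $\pm C$ not positive definite has eigenvalues $\lambda>0>\mu$. After orthogonal diagonalization, a $2\times 2$ block whose columns are both $(1,\sqrt{-\lambda/\mu})^t$ annihilates $A^tCA$. For $\BR$ it then passes to a finite corner $C_m$ and splits into two cases. If $C_m$ is invertible, it reuses the finite argument. If $C_m$ is singular, it uses a null vector of $C_m$.

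\textbf{Your route.} You observe that non-properness only needs one nonzero isotropic vector $x$ (with $x^tCx=0$) and the rank-one matrix $xe_1^t$. You then produce $x$ directly by the intermediate value theorem on the segment from $z$ to $y$. This is more elementary: no spectral theorem and no case split on invertibility of corners. It treats $M_n(\RR)$ and $\BR$ uniformly. It also makes clear that both of the paper's cases are just instances of an isotropic vector.

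\textbf{Minor corrections.}

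One slip in the nonvanishing check: if $t_0y+(1-t_0)z=0$ with $t_0\in(0,1)$, then $y=-\frac{1-t_0}{t_0}z$. This is a negative, not positive, multiple of $z$. The conclusion still holds, since $y^tCy=c^2\,z^tCz$ for any real $c$.

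Naming $U^{-1}$ as $V$ and the Cholesky factor as $W$ clashes with the paper's vector spaces $V$ and $W$; for example, ``$\rann_V(V)$'' becomes ambiguous. Rename them.
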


  \begin{proof}
 Specialising Proposition \ref{P:EquivInvol} to the case $F = \RR$ and $D = I$, we see that the unit $C$ will lead to an involution $\smallsharp$ equivalent to the transpose involution  exactly when $C = \alpha (U^t)^{-1}U^{-1}$ for some $\alpha \in \RR^\times$ and some invertible $U \in R$. Absorbing $\sqrt{\alpha}$ or $\sqrt{-\alpha}$ into $U^{-1}$ and letting $T = U^{-1}$, we see that $\smallsharp$ is equivalent to transpose if and only if $C = \pm T^tT$ for some invertible $T \in R$. But one characterization of positive definitive matrices in $M_n(\RR)$ is that they can be written as $T^tT$ for some invertible $T \in M_n(\RR)$.  The same characterization holds for invertible positive definite elements of $\BR$ by Remark \ref{ctc.posdef} and Theorems \ref{posdef.vs.Chofac}, \ref{Ch.fac.inBR}.  This gives (1).

 For (2), first consider the case of $C \in M_n(\RR)$. Since $\pm C$ are invertible but not positive definite, $C$ has eigenvalues $\lambda > 0$ and $\mu < 0$. To show $\smallsharp$ is not proper it is enough to produce a nonzero matrix $A$ with $A^tCA = 0$ for then $A^{\#}A = C^{-1}(A^tCA) = 0$. But since $C$ is real symmetric it is orthogonally diagonalizable, whence in the preceding condition we can assume $C= \diag(\lambda, \mu,\ldots, \omega )$. Take $A$ to be the matrix whose top left $2 \times 2$ block is
 \[
\left[
\begin{array}{cc}
1 & 1 \\
  &                     \\
\sqrt{\frac{-\lambda}{\mu}} & \sqrt{\frac{-\lambda}{\mu}} \\
\end{array}\right]
\]
 and all other entries zero. One checks $A^tCA = 0$.

 Next suppose $C \in \BR$. Since $\pm C$ are not positive definite, there exist $0 \ne x,y \in V$ with $x^tCx \le 0$ and $y^tCy \ge 0$. Choose $m \ge 2$ such that entries of $x$ and $y$ beyond the $m$th are 0, and let $z := [x_1,x_2,\ldots, x_m]^t$, $w := [y_1,\dots,y_m]^t$. Let $C_m$ be the upper left $m \times m$ corner of $C$. We have $C_m$ is symmetric and $z^tC_mz = x^tCx \le 0$, while $w^tC_mw = y^tCy \ge 0$.  Thus, $C_m$ and $-C_m$ are not positive definite. The corner $C_m$ may or not be invertible. If it is, our earlier argument provides a nonzero $A_m \in M_m(\RR)$ with $A^t_mC_mA_m = 0$. On the other hand, if $C_m$ is not invertible it has a nonzero right annihilator $r \in \RR^m$ . Let $A_m \in M_m(\RR)$ have first column $r$ and other columns zero. Then $C_mA_m = 0$ and $A_m^tC_mA_m = 0$. In either case, let $A \in \BR$ have $A_m$ as its upper left corner and and all other entries 0. We have $A^tCA = 0$ which implies $A^{\#}A = C^{-1}A^tCA = 0$, showing that the involution $\smallsharp$ produced from $C$ is not proper.
 
 Regarding (3), note that as $C$ varies, $\smallsharp$ covers all $\RR$-algebra involutions on $R$ (Theorem \ref{T:AlgInvol}).  If $\smallsharp$ is equivalent to transpose, then $\smallsharp$ is proper, in fact, positive definite, as noted in Definition \ref{D:EquivInvol}.
 
 Conversely, assume $\smallsharp$ is proper.
  If $C$ is skew symmetric, its diagonal entries are $0$.  In particular, $E_{11}CE_{11} = 0$ and $E^{\#}_{11} E_{11} = C^{-1}E_{11}CE_{11} = 0$, contradicting the properness of $\smallsharp$.  Hence, $C$ must be symmetric.  Now by (2), either $C$ or $-C$ is positive definite, and therefore (1) says that $\smallsharp$ is equivalent to transpose.
  
  (4) is immediate from (3) and its proof.
 \end{proof}
 
 \begin{corollary}  \label{proper.inv.BR}
Let $R$ be $\BR$ or $M_n(\RR)$.  If $\smallsharp$ is a proper $\RR$-algebra involution on $R$, then there is an invertible positive definite matrix $C \in R$ such that $A^{\#} = C^{-1}A^tC$ for all $A \in R$.
\end{corollary}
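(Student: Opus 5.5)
The plan is to combine Theorem \ref{T:AlgInvol} with the proof of Theorem \ref{T:EquivInvol}.

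First, Theorem \ref{T:AlgInvol} gives a unit $C_0 \in R$ with $C_0^t = \pm C_0$ and $A^{\#} = C_0^{-1}A^tC_0$ for all $A \in R$. The proof of part (3) of Theorem \ref{T:EquivInvol} rules out the skew symmetric case. If $C_0^t = -C_0$, the diagonal of $C_0$ is zero, so $E_{11}^{\#}E_{11} = C_0^{-1}E_{11}C_0E_{11} = 0$. Since $\smallsharp$ is proper, this is impossible. Hence $C_0$ is symmetric.

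Next, part (2) of Theorem \ref{T:EquivInvol} applies because $\smallsharp$ is proper. It shows that either $C_0$ or $-C_0$ is positive definite. Set $C := C_0$ in the first case and $C := -C_0$ in the second. Then $C$ is a positive definite unit of $R$, hence an invertible positive definite matrix in $R$. Moreover,
\[
C^{-1}A^tC = (-C_0)^{-1}A^t(-C_0) = C_0^{-1}A^tC_0 = A^{\#},
\]
because the two sign changes cancel. This formula is exactly the one required.

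There is no serious obstacle. The only point to check is that the sign change $C_0 \mapsto -C_0$ leaves both the involution and the invertibility unchanged, and it does, as the display shows. All the substantive work (positivity via the finite corners $C_m$, and the exclusion of the skew case) is already contained in Theorem \ref{T:EquivInvol}(2),(3).
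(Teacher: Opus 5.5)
Your proposal is correct and is essentially the paper's own proof. Theorem \ref{T:AlgInvol} supplies a symmetric or skew-symmetric unit, the $E_{11}$ argument from Theorem \ref{T:EquivInvol}(3) rules out the skew case, Theorem \ref{T:EquivInvol}(2) read contrapositively makes $C$ or $-C$ positive definite, and replacing $C$ by $-C$ leaves the involution unchanged.
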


\begin{proof}
By Theorem \ref{T:AlgInvol}, there is a symmetric or skew-symmetric unit $C \in R$ such that $A^{\#} = C^{-1}A^tC$ for $A \in R$, and the argument of Theorem \ref{T:EquivInvol}(3) shows that $C$ is symmetric.  Hence, by Theorem \ref{T:EquivInvol}(2), either $C$ or $-C$ must be positive definite.  We may replace $C$ by $-C$ if necessary, since $C^{-1}A^tC = (-C)^{-1} A^t (-C)$ for $A \in R$.
\end{proof}

By way of an example, if we choose the symmetric unit
\[
\renewcommand{\arraystretch}{1.2}
C \ := \ \left[
\begin{array}{ccc}
0 & 0 & 1 \\
0 & 1 & 0 \\
1 & 0 & 0
\end{array}\right] \ \in \ M_3(\RR),
\]
which as we saw earlier leads to the involution of transposing in the NE--SW diagonal, by (1) of Theorem \ref{T:EquivInvol} this involution cannot be equivalent to the usual transpose because $C$ is not a positive definite matrix (its first two principal minors are not positive). On the other hand,
\[
\renewcommand{\arraystretch}{1.2}
C' \ := \ \left[
\begin{array}{cccc}
1 & 1 & 0 & 0 \\
1 & 2 & 0 & 0 \\
0 & 0 & 1 & 0 \\
0 & 0 & 0 & 1
\end{array}\right] \ \in \ M_4(\RR)
\]
is positive definite (all its principal minors are positive) so its corresponding involution is equivalent to the usual transpose.

\begin{theorem}  \label{T:Equiv*}
Let $R$ be $\BC$ or $M_n(\CC$), and let $C \in R$ be a Hermitian unit.  Let $\smallsharp$ be the conjugate linear involution on $R$ defined as in Theorem \ref{T:ConjLinInvol} using $C$.
\begin{enumerate}
\item The involution $\smallsharp$ is equivalent to the conjugate transpose involution $*$ if and only if $\gamma C$ is positive definite for some nonzero $\gamma \in \CC$.
\item If neither $C$ nor $-C$ is positive definite, then the involution $\smallsharp$ is not a proper involution. 
 \item To within equivalence, conjugate transpose is the only proper conjugate linear involution on $R$.
 \item Every proper conjugate linear involution on $R$ is positive definite.\end{enumerate}
\end{theorem}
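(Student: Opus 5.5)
The plan is to mirror the proof of Theorem \ref{T:EquivInvol}, replacing transpose by $*$ and Proposition \ref{P:EquivInvol} by Proposition \ref{P:EquivConjLinInvol}.

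For (1), take $D=I$ in Proposition \ref{P:EquivConjLinInvol}. Then $\smallsharp$ is equivalent to $*$ if and only if $C=\alpha (U^*)^{-1}U^{-1}=\alpha T^*T$ for some unit $T=U^{-1}$ of $R$ and some $\alpha\in\CC^\times$.

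If this holds, take $\gamma:=\alpha^{-1}$. Then $\gamma C=T^*T$ is positive definite by Remark \ref{ctc.posdef}, since $\rann_V(T)=0$ for a unit $T$.

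Conversely, suppose $\gamma C$ is positive definite. Being positive definite, $\gamma C$ is Hermitian. Since $C$ is also Hermitian, $\bar\gamma C=\gamma C$, so $\gamma\in\RR^\times$. Now $\gamma C$ is an invertible positive definite matrix. By Theorems \ref{posdef.vs.Chofac} and \ref{Ch.fac.inBR}(c) (or the finite Cholesky theorem for $M_n(\CC)$), $\gamma C=T^*T$ with $T$ a unit of $R$. Taking $\alpha:=\gamma^{-1}$ gives the required form.

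For (2), I would copy the argument of Theorem \ref{T:EquivInvol}(2). It suffices to produce a nonzero $A$ with $A^*CA=0$, because then $A^{\#}A=C^{-1}A^*CA=0$.

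In the finite case, $C$ is Hermitian and invertible, and neither $C$ nor $-C$ is positive definite. Hence $C$ is unitarily diagonalizable with real eigenvalues $\lambda>0$ and $\mu<0$. Unitary conjugation preserves the condition $A^*CA=0$, so we may take $C$ diagonal with $\lambda,\mu$ in the first two positions. The same $2\times2$ block $A$ as in Theorem \ref{T:EquivInvol}(2) then satisfies $A^*CA=0$.

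In the $\BC$ case, choose $x,y\in V$ with $x^*Cx\le0$ and $y^*Cy\ge0$. Pass to an $m\times m$ principal corner $C_m$ that is Hermitian and for which neither $\pm C_m$ is positive definite. If $C_m$ is invertible, use the finite case. If not, take $A_m$ whose first column is a nonzero vector of $\rann(C_m)$ and whose other columns are zero. In either case, pad $A_m$ with zeros to get $A\in\BC$ with $A^*CA=0$.

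For (3), Theorem \ref{T:ConjLinInvol} shows every conjugate linear involution has the form $A\mapsto C^{-1}A^*C$ with $C$ a Hermitian unit. If $\smallsharp$ is proper, then by (2) either $C$ or $-C$ is positive definite, so by (1) with $\gamma=\pm1$ the involution $\smallsharp$ is equivalent to $*$. Conversely, any involution equivalent to $*$ is positive definite, hence proper, by the remark in Definition \ref{D:EquivInvol}.

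Part (4) then follows from (3) together with that same remark. Unlike the real case, no skew-symmetric case arises here, because Theorem \ref{T:ConjLinInvol} already normalizes $C$ to be Hermitian.

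The main obstacle is the step in (1) showing that $\gamma$ must be real; this uses the fact that $\gamma C$ being positive definite forces it to be Hermitian. A secondary point is that Cholesky must yield a factor $T$ that is a unit of $R$ itself, which is exactly Theorem \ref{Ch.fac.inBR}(c).
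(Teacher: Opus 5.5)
Your proposal is correct and follows essentially the same route as the paper. Part (1) goes through Proposition \ref{P:EquivConjLinInvol} with $D=I$ together with the Cholesky results (Theorems \ref{posdef.vs.Chofac}, \ref{Ch.fac.inBR}). Part (2) transplants the argument of Theorem \ref{T:EquivInvol}(2), using unitary diagonalization and the real $2\times2$ block. Parts (3)--(4) follow from Theorem \ref{T:ConjLinInvol} and the remark in Definition \ref{D:EquivInvol}. Your extra step showing that $\gamma$ is real is correct but unnecessary (so hardly the ``main obstacle''), because Proposition \ref{P:EquivConjLinInvol} already allows any nonzero complex scalar $\alpha=\gamma^{-1}$.
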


\begin{proof}
(1) Taking $D = I$ in Proposition \ref{P:EquivConjLinInvol}, we see that $\smallsharp$ and $*$ are equivalent if and only if there are a unit $U \in R$ and a nonzero scalar $\beta \in \CC$ such that $C = \beta (U^*)^{-1} U^{-1}$.  Rewriting this with $\gamma = \beta^{-1}$ and $T = U^{-1}$, the condition becomes $\gamma C = T^*T$ for some nonzero $\gamma \in \CC$ and some unit $T \in R$.  By Remark \ref{ctc.posdef} and Theorems \ref{posdef.vs.Chofac}, \ref{Ch.fac.inBR}, the positive definite units in $\BC$ are the products $T^*T$ for units $T$ in $\BC$.  The corresponding statement for $M_n(\CC)$ is a classical fact.  This establishes (1).

(2) Proceed as in Theorem \ref{T:EquivInvol}(2), with transpose replaced by $*$, and observing that (a) $\pm C$, being Hermitian and invertible, have all eigenvalues nonzero and real; (b) $C$ is unitarily diagonalizable; (c) the $2\times2$ matrix $A$ satisfies $A^* = A^t$.

(3) As Theorem \ref{T:EquivInvol}(3), with Theorem \ref{T:AlgInvol} replaced by Theorem \ref{T:ConjLinInvol}.

(4) is immediate from (3) and its proof.
\end{proof}

 \begin{corollary}  \label{proper.inv.BC}
Let $R$ be $\BC$ or $M_n(\CC)$.  If $\smallsharp$ is a proper conjugate linear involution on $R$, then there is an invertible positive definite matrix $C \in R$ such that $A^{\#} = C^{-1}A^*C$ for all $A \in R$.
\end{corollary}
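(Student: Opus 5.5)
We mirror the proof of Corollary \ref{proper.inv.BR}, with Theorem \ref{T:ConjLinInvol} in place of Theorem \ref{T:AlgInvol} and Theorem \ref{T:Equiv*} in place of Theorem \ref{T:EquivInvol}. First, Theorem \ref{T:ConjLinInvol} gives a Hermitian unit $C \in R$ such that $A^{\#} = C^{-1}A^*C$ for all $A \in R$. Note that, unlike the real case, no skew-Hermitian option has to be ruled out. The proof of Theorem \ref{T:ConjLinInvol} has already absorbed the unimodular scalar $\alpha$ into $C$ via $\gamma^{-1}C$.

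Next, since $\smallsharp$ is proper, Theorem \ref{T:Equiv*}(2) (in contrapositive form) says that $C$ or $-C$ is positive definite. For completeness, that argument runs as follows when $R = \BC$. If neither $\pm C$ is positive definite, choose $x,y \in V$ with $x^*Cx \le 0 \le y^*Cy$. Pass to a Hermitian principal corner $C_m$, for which neither $\pm C_m$ is positive definite. Then produce a nonzero $A_m$ with $A_m^*C_mA_m = 0$. If $C_m$ is singular, take $A_m$ with a kernel vector as its first column. If $C_m$ is invertible, it has eigenvalues of both signs, and after unitary diagonalization we use the explicit real $2\times2$ block from Theorem \ref{T:EquivInvol}(2). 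Padding $A_m$ with zeros gives a nonzero $A$ with $A^{\#}A = C^{-1}(A^*CA) = 0$, which contradicts properness.

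Finally, if $-C$ is the positive definite one, replace $C$ by $-C$. This does not change the involution, because $(-C)^{-1}A^*(-C) = C^{-1}A^*C$. Invertibility of the new $C$ in $R$ is inherited from $C$ being a unit.

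The only step needing care is the second. One must check that the sign alternative really covers every case: a Hermitian $C$ has $x^*Cx$ real for all $x$, so "neither $C$ nor $-C$ positive definite" yields the two vectors $x,y$ used above. One must also check that the corner $C_m$ stays Hermitian and indefinite or singular. Both checks are routine, since $(C_m)^* = (C^*)_m$ and the quadratic forms agree on vectors supported in the first $m$ coordinates. Everything else is a direct citation.
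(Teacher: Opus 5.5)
Your proposal is correct and takes the paper's route: Theorem \ref{T:ConjLinInvol} gives a Hermitian unit $C$, then Theorem \ref{T:Equiv*}(2) (in contrapositive form) forces $C$ or $-C$ to be positive definite, and replacing $C$ by $-C$ leaves the involution unchanged. Your unpacking of the corner and unitary-diagonalization argument behind Theorem \ref{T:Equiv*}(2) faithfully expands what the paper sketches there.
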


\begin{proof}
As Corollary \ref{proper.inv.BR}, with Theorems \ref{T:AlgInvol} and \ref{T:EquivInvol} replaced by Theorems \ref{T:ConjLinInvol} and \ref{T:Equiv*}.
\end{proof}

\begin{ack*}
We thank George Bergman for numerous comments and suggestions, as well as for Examples \ref{ex.bergman2} and \ref{ex.bergman1} and permission to present them here.  We thank Daniel Bossaller for alerting us to the fact that automorphisms of $B(F)$ are inner.
\end{ack*}



\begin{thebibliography}{99}

\bibitem{Axl} S. Axler, \emph{Linear Algebra Done Right}, 4th. Ed., Cham (2024) Springer.

\bibitem{GB} G. Bergman, Private correspondence, 2026.

\bibitem{CWS} C.K. Chui, J.D. Ward, and P.W. Smith, \emph{Cholesky factorization of positive definite bi-infinite matrices}, Numerical Func. Anal. and Optimization \textbf{5} (1982), 1--20.

\bibitem{Fin} B. Finta, \emph{The $LL^T$ factorization for infinite matrices}, AIP Conf. Proc. \textbf{1148} (2009), 778--780.

\bibitem{GMM} K.R. Goodearl, P. Menal, and J. Moncasi, \emph{Free and residually artinian regular rings}, J. Algebra \textbf{156} (1993), 407--432.

\bibitem{HJ} R.A. Horn and C.R. Johnson, \emph{Matrix Analysis}, 2nd. Ed., Cambridge (2013) Cambridge Univ. Press.

\bibitem{Jac45} N. Jacobson, \emph{The radical and semi-simplicity for arbitrary rings}, Amer. J. Math. \textbf{67} (1945), 300--320.

\bibitem{Jac53} N. Jacobson, \emph{Lectures in Abstract Algebra II -- Linear Algebra}, Princeton (1953) Van Nostrand.

\bibitem{lam} T.Y. Lam, \emph{A First Course in Noncommutative Rings}, 2nd. ed., Grad. Texts in Math. \textbf{131}, New York (2001) Springer-Verlag.

\bibitem{Nic} W.K. Nicholson, \emph{Linear Algebra with Applications}, 5th. ed., Toronto (2006) McGraw-Hill Ryerson.

\bibitem{OM2003} K.C. O'Meara, \emph{The exchange property for row and column-finite matrices}, J. Algebra \textbf{268} (2003), 744--749.

\bibitem{O2} K.C. O'Meara, \emph{A new setting for constructing von Neumann regular rings}, Comm. Alg. \textbf{45} (2017), 2186--2194.

\bibitem{Ver} P. Vermes, \emph{Non-associative rings of infinite matrices}, Nederl. Akad. Wetensch. Proc. Ser. A \textbf{55}, Indag. Math. \textbf{14} (1952), 245--252.

\bibitem{Wan1}  S.Q. Wang, \emph{On the inverses of infinite matrices over a field}, J. Beijing Normal Univ. (Chinese) \textbf{29} (1993), 327.

\bibitem{Wan2} S.Q. Wang, \emph{Diagonalization of row-column-finite infinite matrices}, Sci. China (Ser. A) \textbf{40} (1997), 1279--1286.

\end{thebibliography}
\end{document}